\newcommand\blfootnote[1]{%
  \begingroup
  \renewcommand\thefootnote{}\footnote{#1}%
  \addtocounter{footnote}{-1}%
  \endgroup
}
\newcommand{\RR}{{\mathbb R}}
\newcommand{\ZZ}{\mathbb Z}
\newcommand{\QQ}{{\mathbb Q}}
\def\G{{\mathcal G}}
\def\M{{\mathcal M}}
\def\P{{\mathcal P}}
\def\T{{\mathcal T}}
\numberwithin{equation}{section}
\newtheorem{teo}{Theorem}
\newtheorem{theo}{Theorem}
\newtheorem{proposition}[theo]{Proposition}
\newtheorem{coro}[theo]{Corollary}
\newtheorem{lemma}[theo]{Lemma}
\newtheorem{defi}[theo]{Definition}
\newtheorem{remark}[theo]{Remark}
\newtheorem{prop}[theo]{Proposition}
\author{Paulina Cecchi%
  \thanks{The first author was supported by the PhD grant CONICYT-PCHA/Doctorado Nacional/2015-21150544.}}
\affil{Departamento de Matem\'atica y Ciencia de la Computaci\'on, Universidad de Santiago de Chile  \\ paulina.cecchi@usach.cl}
\author{Mar\'{\i}a Isabel Cortez%
  \thanks{The research of the second author was supported by  Fondecyt Research Project
1140213.}}
\affil{Departamento de Matem\'atica y Ciencia de la Computaci\'on, Universidad de Santiago de Chile \\  maria.cortez@usach.cl}
\begin{document}

\title{Invariant measures for actions of congruent monotileable amenable groups. }

\date{Dated: \today}

\maketitle{}

\begin{abstract}    \noindent In this paper we  show that for every congruent monotileable amenable group $G$ and  for every metrizable Choquet simplex $K$, there exists a minimal $G$-subshift, which is free on a full measure set,  whose set of invariant probability measures is affine homeomorphic to $K$.  If the group is virtually abelian, the subshift is free. Congruent monotileable amenable groups are a generalization of amenable residually finite groups. In particular, we show that this class contains  all the infinite countable virtually nilpotent groups.  This article is a  generalization to congruent monotileable amenable groups of one of the principal results shown in \cite{CP} for residually finite groups. 

\end{abstract}

 \blfootnote{\textup{2010} \textit{Mathematics Subject Classification}: \textup{37B05, 37C85, 37B10}}
  \blfootnote{{\it Keywords:} minimal shift action, invariant measures, monotileable groups.}

\section{Introduction}

\noindent The set of invariant probability measures of a continuous action of an amenable group on a compact metric space is a (non empty) metrizable Choquet simplex (see for example \cite{Gl}). A natural question is whether the converse is true, i.e,  if given a metrizable Choquet simplex $K$ and an amenable group $G$, it is   possible to realize $K$ as the set of invariant probability measures of a continuous action of $G$ on a compact metric space.   In \cite{D91}  Downarowicz answered  for the first time this question in the case $G=\ZZ$, showing  that every metrizable Choquet simplex  can be realized as the set of invariant probability measures of a Toeplitz $\ZZ$-subshift. The extension of this result to any amenable residually finite group $G$ was shown in \cite{CP}. In the recent work \cite{FH16}, the authors show that every face in the simplex of invariant measures of a zero-dimensional free dynamical system given by an action of an amenable group, can be realized as the entire simplex of invariant measures on some other zero-dimensional dynamical system with a free action of the same group. Thus, if the Poulsen simplex could be obtained as the set of invariant measures of a free action of some prescribed amenable group $G$, the same would hold for any Choquet simplex.\\  
 
\noindent The realization of Choquet simplices as sets of invariant measures is related to orbit equivalence problems. Indeed, an invariant for topological orbit equivalence among minimal free actions on the Cantor set is the ordered group with unit associated to those systems (which is a complete invariant for $\ZZ^d$-actions \cite{GMPS10,GPS95}).  On the other hand,  the space of traces of the associated ordered group with unit is affine homeomorphic to the set of invariant probability measures of the system (see for example \cite{HPS}).  Thus, an amenable group $G$ with the property that every metrizable Choquet simplex can be realized as the set of invariant probability measures of a minimal free $G$-action on the Cantor set, admits  at least as many  topological orbit equivalence classes as metrizable Choquet simplices exist.\\ 

\noindent In this paper we deal with the problem of realization of Choquet simplices in the context of actions of monotileable groups.  In \cite{W} Weiss introduces the concept of amenable monotileable group, which are a generalization of amenable residually finite groups, in the sense that, roughly speaking, the monotiles used to tile a monotileable group, play the role of the fundamental  domains of the finite index subgroups of the residually finite groups. It is still unknown if there are amenable groups which are not monotileable.  In this article, we introduce the concept of congruent monotileable amenable group, which include all the amenable residually finite groups. We show that the class of congruent monotileable amenable groups  is larger than the class  of amenable residually finite group.  More precisely, we show the following result.

\begin{teo}\label{nilpotentes}
Every countable virtually nilpotent group is congruent monotileable.
\end{teo}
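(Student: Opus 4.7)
The plan is to proceed in three stages: reduce from virtually nilpotent to nilpotent via a finite-index closure property, handle finitely generated nilpotent groups using Hirsch's classical residual finiteness theorem, and then exhaust arbitrary countable nilpotent groups by their finitely generated subgroups. For the first reduction, if $H\le G$ has finite index with transversal $S$ and $\{F_n\}$ is a congruent monotile sequence for $H$, then $\{F_n S\}$ should give one for $G$: the decomposition $G=HS$ lifts tilings of $H$ to $G$, the refinement $F_{n+1}=\bigsqcup_i F_n c_i$ propagates to $F_{n+1}S=\bigsqcup_i(F_n c_i)S$, and the F\o lner property survives since $|S|$ is finite. For the finitely generated nilpotent case, Hirsch's theorem gives residual finiteness, whence congruent monotileability by the standard construction using a nested chain $G\triangleright H_1\triangleright H_2\triangleright\cdots$ of finite-index normal subgroups with $\bigcap_n H_n=\{e\}$, together with nested fundamental domains $F_n$ of $G/H_n$; polynomial growth of nilpotent groups lets one arrange the F\o lner estimates.

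For the countable nilpotent case, let $N$ be countable nilpotent and write $N=\bigcup_k N_k$ with $N_k\le N_{k+1}$ finitely generated nilpotent (hence congruent monotileable by the previous stage). The key observation is that a monotile $T$ of $N_k$ is automatically a monotile of $N$: if $N_k=\bigsqcup_{c} Tc$ and $R$ is a transversal for $N_k$ in $N$, then $N=\bigsqcup_{c,r} Tcr$. I would then build a congruent monotile sequence $\{T_n\}$ for $N$ inductively, with $T_n\subseteq N_{k_n}$ a monotile of $N_{k_n}$, $T_{n+1}=T_n\cdot C_n$ for a finite $C_n\subseteq N_{k_{n+1}}$, and F\o lner estimates in $N$ improving along a diagonal over $n$ and $k_n$.

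The main obstacle is the inductive step: for $T_{n+1}=T_n C_n$ to monotile $N_{k_{n+1}}$, the finite set $C_n$ must itself tile the center set $E_n\cdot R$ (with $E_n$ the centers of $T_n$ in $N_{k_n}$ and $R$ a transversal of $N_{k_n}$ in $N_{k_{n+1}}$) under right multiplication. To produce such a $C_n$, I would invoke the congruent monotile sequence of $N_{k_{n+1}}$ from stage two: find a term $F$ of that sequence which (after translation) contains $T_n$ and refines the tiling of $N_{k_{n+1}}$ by $T_n$; then $F$ decomposes as $T_n\cdot C_n$ by matching the two tilings. The delicate point is arranging the monotiles of the $N_k$ to be compatible with the previously-chosen $T_n$ across the chain; this interplay between horizontal refinement (within a single $N_k$) and vertical extension (from $N_k$ to $N_{k+1}$) is where the congruence in the monotile sequences of the individual $N_k$ plays its essential role, and it is the technical heart of the argument.
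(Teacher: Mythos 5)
Your first two stages match the paper: the finite-index reduction (taking $F_nS$ for a transversal $S$) is the paper's Remark \ref{virtually}, and the finitely generated case via residual finiteness is exactly how the paper disposes of it (citing \cite{CP}). The gap is in your third stage, and it sits precisely at the point you yourself flag as ``the technical heart.'' You construct congruent monotile sequences for each $N_k$ \emph{independently} and then hope to find a term $F$ of the sequence for $N_{k_{n+1}}$ that, after translation, is a disjoint union of translates of the previously chosen $T_n$. Nothing guarantees this: the tiling of $N_{k_{n+1}}$ by $T_n$ (lifted through a transversal of $N_{k_n}$) and the tiling by $F$ are two unrelated tilings of $N_{k_{n+1}}$, and a tile of one is in general not a union of tiles of the other. ``Matching the two tilings'' is exactly the statement that needs proof, and no mechanism is offered. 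The paper resolves this by never constructing the sequences independently: in the abelian case it builds the congruent sequence of $G_n$ \emph{from} that of $G_{n-1}$ as products $F^{n-1}_{m_{n-1,s}}\widehat{T}^{n-1}_s$ (Lemma \ref{key-lemma}), where Weiss's Theorem 2 is what guarantees that a product of a sufficiently invariant monotile of the kernel with a lifted monotile of the quotient is again a monotile. Your argument invokes no analogue of Weiss's theorem, so even the claim that $T_nC_n$ tiles $N_{k_{n+1}}$ is unsupported.

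A second, independent problem is that your stage three cannot work as stated for non-abelian nilpotent groups. The congruence condition of Definition \ref{defi2} requires $F_{n+1}$ to be partitioned by \emph{left} translates $cF_n$, whereas your construction $T_{n+1}=T_n\cdot C_n$ produces right translates; converting one into the other requires the relevant elements to commute. This is why the paper's Lemma \ref{key-lemma} only yields congruence under the hypothesis $L\subseteq Z(G)$, and why the nilpotent case is handled by induction on nilpotency class through the central extensions $1\to G^{n-1}\to G\to G/G^{n-1}\to 1$ of the lower central series, rather than by exhausting $G$ with finitely generated subgroups (which need not even be normal in one another, let alone central). To repair your argument you would need both ingredients: a product-of-monotiles theorem (Weiss) to make the vertical extension step work, and a reduction to central kernels to make the congruence propagate.
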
 
Thus abelian groups which are not residually finite (for example $\QQ$, or the Pr$\ddot{u}$fer group) are congruent monotileable.  It is an open question whether every monotileable group is congruent monotilable.

We answer the question of realization of Choquet simplices for congruent monotileable groups. Our principal result is the following. 

\begin{teo}\label{theoremA}
Let $G$ be an infinite congruent monotileable amenable group. For every metrizable Choquet simplex $K$, there  exists a minimal   $G$-subshift, which is free on a full measure set,   whose set of invariant probability measures is affine homeomorphic to $K$. If $G$ is virtually abelian, the subshift  is free.
\end{teo}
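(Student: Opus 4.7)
The plan is to adapt the Toeplitz-tower construction of \cite{D91} and \cite{CP} to the setting of congruent monotileable amenable groups, with congruent monotiles playing the role previously played by fundamental domains of finite-index normal subgroups. From the hypothesis that $G$ is congruent monotileable one obtains a nested sequence of monotiles $(F_n)_{n\geq 1}$ with the following features: each $F_n$ tiles $G$ through a set of centres $T_n$, so that $\{F_n t: t\in T_n\}$ partitions $G$; the sequence $(F_n)$ is F\o lner; and $F_{n+1}$ decomposes into a disjoint union of translates of $F_n$ through a finite set $C_n\subseteq G$ with $F_{n+1}=\bigsqcup_{c\in C_n} F_n c$, compatibly with the centres $T_n, T_{n+1}$. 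This congruence produces an inverse-limit tiling space $\Omega$ carrying a unique $G$-invariant probability measure, which is the candidate maximal equicontinuous factor of the subshift we construct.

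The second step is to build a Toeplitz-like configuration $\eta\in A^G$ over a finite alphabet $A$ by specifying inductively a sequence of partial patterns $p_n:F_n\to A\cup\{*\}$, where $*$ denotes an undefined position. Starting from an arbitrary $p_1$, form $p_{n+1}$ by placing translated copies of $p_n$ at positions indexed by $C_n$, and then filling a carefully chosen subcollection of the remaining $*$-positions with blocks drawn from a finite decorating library $\{w_{n,i}\}_{i\in I_n}$. Each block $w_{n,i}$ is inserted with a prescribed asymptotic density inside $F_{n+1}$, and these densities are designed so that the resulting sequence of empirical distributions realises the Choquet simplex $K$ through Downarowicz's inverse-limit encoding of any metrizable Choquet simplex as a limit of finite-dimensional simplices under affine connecting maps. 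The density of $*$-positions tends to zero by the F\o lner property, so the patterns extend consistently, via the tilings $T_n$, to a well-defined $\eta\in A^G$.

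The third step is the analysis of the orbit closure $X=\overline{\{g\cdot\eta:g\in G\}}\subseteq A^G$. Minimality follows from the Toeplitz-type return property: every pattern appearing on some $F_n$-tile reappears with bounded gaps along the tiling $T_n$. The key ergodic-theoretic step is to describe the canonical factor map $\pi:X\to \Omega$ and to show that $G$-invariant probability measures on $X$ correspond bijectively and affinely to the inverse limit of the finite-dimensional simplices encoded at each level; by our choice of decorating densities this limit is precisely $K$. This portion runs in parallel with the argument in \cite{CP}, with cosets of subgroups replaced throughout by $F_n$-tiles and with the $G$-odometer replaced by the tiling space $\Omega$.

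Finally, freeness on a full-measure subset of $X$ is obtained on $\pi^{-1}(\Omega_0)$, where $\Omega_0\subseteq\Omega$ consists of the tilings whose underlying centre system has trivial stabiliser; the F\o lner/congruence structure ensures $\Omega_0$ has full measure in $\Omega$ under its unique invariant measure, hence under every invariant measure of $X$ by disintegration. For virtually abelian $G$, we upgrade to everywhere-freeness by choosing the monotiles $F_n$ with extra symmetry coming from a finite-index abelian subgroup $H\leq G$: using that $H$ is the increasing union of its finitely generated subgroups, each residually finite, one selects $F_n$ as suitable unions of fundamental domains of finite-index subgroups of $H$, arranging the $C_n$ and $T_n$ so that $\Omega$ itself is free. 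I expect the main difficulty to lie in the third step, where one must simultaneously control the empirical distributions of the decorating blocks (in order to realise $K$ exactly and not a larger simplex) while keeping the Toeplitz complexity low enough that the maximal equicontinuous factor of $X$ coincides with $\Omega$, so that no extra invariant measures are introduced outside the inverse-limit description.
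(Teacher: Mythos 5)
Your overall philosophy (congruent tiles replacing fundamental domains, incidence data realising $K$ as an inverse limit of finite-dimensional simplices via the machinery of \cite{D91,CP}) matches the paper, but two of your load-bearing steps have genuine gaps. First, your third step routes everything through a tiling-space factor $\Omega$ that you assert is uniquely ergodic and is the maximal equicontinuous factor of $X$. Neither assertion is justified, and the second one is essentially false in the new cases the theorem is meant to cover: the paper points out explicitly that when $G$ is not residually finite the subshifts obtained are \emph{not} Toeplitz, because only residually finite groups admit regularly recurrent Cantor actions; there is no odometer-like equicontinuous factor to disintegrate over. The congruence structure of $(F_n)_{n\geq 0}$ lives inside the nested sequence of tiles, not as a $G$-invariant hierarchy of cosets, so ``return times to clopen sets'' are no longer subgroups and the Toeplitz $*$-filling formalism does not transfer. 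The paper avoids $\Omega$ entirely: it builds fully defined blocks $B_{n,k}\in\Sigma^{F_n}$ with a marker condition $B_{n+1,k}(F_n)=B_{n,1}$ and a recognizability condition (C3), obtains nested clopen Kakutani--Rokhlin partitions $\P_n=\{\sigma^{v^{-1}}(C_{n,k})\}$ of $X$ itself, and identifies $\M(X,\sigma|_X,G)$ with $\varprojlim_n(\triangle(k_n,|F_n|),M_n)$ directly from the incidence matrices, using only the F\o lner property to show that the exceptional set $\partial X$ is null for every invariant measure. If you keep your $*$-filling construction you must at minimum prove an analogue of (C3) (unique decoding of level-$n$ blocks) and replace the appeal to $\Omega$ by a direct computation of the simplex from the tower structure.

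Second, your upgrade to everywhere-freeness in the virtually abelian case does not work as stated. You propose to choose the $F_n$ as unions of fundamental domains of finite-index subgroups of a finite-index abelian subgroup $H\leq G$; but the interesting new examples here are groups like $\QQ$ or the Pr\"ufer group, which are divisible and have no proper finite-index subgroups at all, so there are no such fundamental domains to use. The paper's argument is instead purely dynamical: given $\sigma^g(x)=x$ with $g$ in the abelian finite-index subgroup $\Gamma$, one extracts a subsequence of the tower addresses $v_n$ lying in a single coset $v\Gamma$, writes $v_{n_k}=v\gamma_k$, and uses commutativity ($\sigma^{g\gamma_k}=\sigma^{\gamma_k g}$) to transport the fixed-point relation to the base point $x_0$, concluding $\sigma^{gv^{-1}}(x_0)=\sigma^{v^{-1}}(x_0)$ and hence $g=1_G$ because $x_0\notin\partial X$. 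You would need an argument of this kind (or another mechanism) rather than extra symmetry of the monotiles. The remaining ingredients of your proposal --- minimality via syndetic return times of the base point, realisation of $K$ by prescribing block multiplicities according to managed matrices, and nullity of the bad set via the F\o lner condition --- are consistent with the paper's proof.
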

  
Combining Theorem \ref{nilpotentes} and Theorem \ref{theoremA}, we get that for any countable abelian group $G$ (even those which are not residually finite) and any metrizable Choquet simplex $K$ there exists a minimal free action on the Cantor set whose set of invariant probability measures is affine homeomorphic to $K$. 

It is important to remark that  when $G$ is not residually finite, the minimal free $G$-subshifts that we get are not Toeplitz. This is because Toeplitz subshifts are regularly recurrent, and the only groups admitting regularly recurrent actions on the Cantor set are residually finite groups. \\
  
\noindent This paper is organized as follows. In Section \ref{section-monotilable} we introduce the concept of congruent monotileable amenable group, and we show Theorem \ref{nilpotentes}. 
In Section  \ref{section-construction} we construct minimal  $G$-subshifts for any congruent monotileable amenable group $G$. We give a characterization of the set of invariant probability measures of those subshifts in terms of inverse limits as it was done in \cite{CP}. We also make some comments about the associated ordered group with unit, noting that its rational subgroup is non cyclic.  In Section \ref{section-results} we use the construction of the previous section to show our principal result.

\section{Monotileable amenable groups}\label{section-monotilable}
 
Through this paper,  $G$ is an amenable discrete infinite countable group. We denote by $1_G$ the unit of $G$. 

By a {\it right F\o lner sequence} of $G$ we mean a sequence $(F_n)_{n\geq 0}$ of finite subsets of $G$ such that for every $g\in G$,
 $$
  \lim_{n\to \infty}\frac{|F_ng\setminus F_n|}{|F_n|}=0.
 $$
We take the existence of a right F\o lner sequence as the definition of amenablitiy for $G$, that is, the group $G$  is {\it amenable} if it has a right F\o lner sequence.
 
 The next result is direct (see for example \cite[Proposition 4.7.1]{CC}).   
\begin{lemma}\label{remark-folner}
 The following conditions are equivalent:   
\begin{enumerate}
\item $(F_n)_{n\geq 0}$ is a  right F\o lner sequence of $G$.

\item For every $g\in G$,
$$
 \lim_{n\to \infty}\frac{|F_n\setminus F_ng|}{|F_n|}=0.
$$ 
\item $(F_n^{-1})_{n\geq 0}$ is a left F\o lner sequence of $G$, i.e, for every $g\in G$,
$$
  \lim_{n\to \infty}\frac{|gF_n\setminus F_n|}{|F_n|}=0.
 $$
\end{enumerate}   
 \end{lemma}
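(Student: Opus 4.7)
The plan is to prove the two equivalences (1) $\iff$ (2) and (1) $\iff$ (3) by exhibiting explicit cardinality-preserving bijections of $G$, so that no limit argument beyond elementary counting is required; the equivalences become identities at every finite stage $n$, and one only divides by $|F_n|$ at the end.

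For (1) $\iff$ (2), the observation is that right translation $x \mapsto xg$ is a bijection of $G$, hence $|F_n g| = |F_n|$. Writing the two decompositions $F_n = (F_n \cap F_n g) \sqcup (F_n \setminus F_n g)$ and $F_n g = (F_n g \cap F_n) \sqcup (F_n g \setminus F_n)$ and equating cardinalities yields $|F_n g \setminus F_n| = |F_n \setminus F_n g|$ for every fixed $g$ and $n$, so dividing by $|F_n|$ gives the equivalence of the two limit conditions directly.

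For (1) $\iff$ (3), the key remark is that the involution $x \mapsto x^{-1}$ sends $gF_n^{-1}$ bijectively onto $F_n g^{-1}$ (since $(gh^{-1})^{-1} = hg^{-1}$) and $F_n^{-1}$ bijectively onto $F_n$, whence $|gF_n^{-1} \setminus F_n^{-1}| = |F_n g^{-1} \setminus F_n|$ and $|F_n^{-1}| = |F_n|$. Thus the statement that $(F_n^{-1})$ is a left F\o lner sequence, namely $|gF_n^{-1} \setminus F_n^{-1}|/|F_n^{-1}| \to 0$ for every $g \in G$, translates into $|F_n g^{-1} \setminus F_n|/|F_n| \to 0$ for every $g \in G$, and since $g \mapsto g^{-1}$ is a permutation of $G$ this last condition is exactly (1).

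The argument has no real obstacle: everything reduces to routine set-theoretic bookkeeping valid in any group, with amenability playing no role in the equivalences themselves. The only point to keep in mind is that for a non-abelian group a right F\o lner sequence need not itself be a left F\o lner sequence with the \emph{same} indexing; one genuinely has to pass to the inverse sequence $(F_n^{-1})$, and inversion is precisely what converts right translation of $F_n$ into left translation of $F_n^{-1}$.
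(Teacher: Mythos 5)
Your proof is correct. The paper itself gives no argument for this lemma---it calls the result ``direct'' and cites \cite[Proposition 4.7.1]{CC}---and your two bijection arguments ($|F_ng|=|F_n|$ forcing $|F_ng\setminus F_n|=|F_n\setminus F_ng|$, and inversion carrying $gF_n^{-1}\setminus F_n^{-1}$ onto $F_ng^{-1}\setminus F_n$ together with the substitution $g\mapsto g^{-1}$) are exactly the routine verification being alluded to, so there is nothing to add.
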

The conditions above are usually refered as the F\o lner conditions. Right F\o lner sequences are somehow those which become more and more invariant under right multiplication. We precisize this notion in the following lemma. Its proof is standard, we include it here for completeness.\\
 
\begin{lemma}\label{invariance}
$(F_n)_{n\geq 0}$ is a  right F\o lner sequence of $G$ if and only if for every finite subset $K$ of $G$ and for every $\varepsilon>0$,  there exists $N\geq 0$ such that for all $n\geq N$, $F_n$ is right $(K,\varepsilon)$-invariant, that is 
$$
\frac{|\{g\in F_n: gK\subset F_n\}|}{|F_n|}\geq (1-\varepsilon)
$$
\end{lemma}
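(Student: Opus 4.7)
The plan is to translate the condition $gK \subset F_n$ into a union of set differences of the form $F_n \setminus F_n k^{-1}$, and then invoke Lemma \ref{remark-folner}(2) to handle both directions uniformly. The only real difficulty is keeping the direction of multiplication (and the corresponding inversions) straight, since $(K,\varepsilon)$-invariance is naturally phrased in terms of sets $F_n k^{-1}$ whereas Lemma \ref{remark-folner} involves $F_n g$ and $F_n\setminus F_n g$; there is no quantitative subtlety beyond a factor of $|K|$.

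For the forward implication ($\Rightarrow$), I would note that an element $g \in F_n$ fails $gK \subset F_n$ precisely when some $k \in K$ satisfies $gk \notin F_n$, i.e.\ $g \in F_n \setminus F_n k^{-1}$. This yields the bound
$$|\{g \in F_n : gK \not\subset F_n\}| \leq \sum_{k \in K} |F_n \setminus F_n k^{-1}|.$$
Assuming $(F_n)_{n\geq 0}$ is right F\o lner, part (2) of Lemma \ref{remark-folner} gives $|F_n \setminus F_n k^{-1}|/|F_n| \to 0$ for every $k \in K$, and finiteness of $K$ lets me pick $N$ so that this sum is below $\varepsilon |F_n|$ for all $n \geq N$; taking complements inside $F_n$ produces the desired inequality.

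For the converse ($\Leftarrow$), I would apply the hypothesis with the singleton set $K = \{g\}$ for an arbitrary $g \in G$. Since $\{h \in F_n : hg \in F_n\} = F_n \cap F_n g^{-1}$, the $(K,\varepsilon)$-invariance inequality rewrites as $|F_n \setminus F_n g^{-1}|/|F_n| \leq \varepsilon$ for all sufficiently large $n$. Substituting $g^{-1}$ for $g$ (which is legitimate since $g$ was arbitrary) and invoking Lemma \ref{remark-folner}(2) then produces the right F\o lner property for $(F_n)$, completing the equivalence.
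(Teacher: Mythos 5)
Your proof is correct and follows essentially the same route as the paper's: both directions rest on rewriting $gK\subset F_n$ via the sets $F_n\setminus F_nk^{-1}$, a union bound over the finite set $K$, and specializing to a singleton $K=\{g\}$ for the converse, with Lemma \ref{remark-folner}(2) supplying the translation to the right F\o lner condition. The only cosmetic difference is that the paper concludes the converse via the identity $|F_n\setminus F_ng^{-1}|=|F_ng\setminus F_n|$ rather than by substituting $g^{-1}$ for $g$, which is an equivalent step.
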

\begin{proof}
First note that
$$\{g\in F_n: gK\subset F_n\}=\bigcap_{k\in K}F_n\cap F_nk^{-1}$$
Suppose $(F_n)_{n\geq 0}$ is a right F\o lner sequence, and take any finite $K\subseteq G$, $\varepsilon >0$. Since for all $k\in K$, $\lim_{n\to \infty}\frac{|F_n\setminus F_nk^{-1}|}{|F_n|}=0$, then for all $k\in K$, $\exists$ $N_k\in\mathbb{N}$ such that for all $n\geq N_k$,
$$\frac{|F_n\setminus F_nk^{-1}|}{|F_n|}<\frac{\varepsilon}{|K|}$$
Since $K$ is finte, there is a $N\in \mathbb{N}$ such that for all $n\geq N$ and for all $k\in K$, the above inequality holds. Now,
$$F_n\setminus \bigcap_{k\in K}F_n\cap F_nk^{-1}=\bigcup_{k\in K}F_n\setminus F_nk^{-1}$$
then,
\begin{align*}
|F_n|-|\bigcap_{k\in K}F_n\cap F_nk^{-1}|=& |F_n\setminus\bigcap_{k\in K}F_n\cap F_nk^{-1}|\\
=&|\bigcup_{k\in K}F_n\setminus F_nk^{-1}|\\
\leq & \sum_{k\in K}|F_n\setminus F_nk^{-1}|<|F_n|\varepsilon
\end{align*} 
and therefore, 
$$|F_n|-|\{g\in F_n: gK\subset F_n\}|<\varepsilon|F_n|$$
Conversely, suppose that for any $\varepsilon>0$ and any finite $K\subseteq G$, there is a $N\in \mathbb{N}$ such that for all $n\geq N$, $F_n$ is right $(K,\varepsilon)$-invariant. Take $g\in G$ and consider $K=\{g\}$. Let $\varepsilon>0$. Then, there is a $N\in \mathbb{N}$ such that for all $n\geq N$, 
$$|F_n\cap F_ng^{-1}|>(1-\varepsilon)|F_n|$$
$$\Leftrightarrow \forall n\geq N,\quad \frac{|F_n\setminus F_n\cap F_ng^{-1}|}{|F_n|}<\varepsilon$$
But $F_n\setminus F_n\cap F_ng^{-1}=F_n\setminus F_ng^{-1}$, and therefore, for all $n\geq N$,
$$\frac{|F_n\setminus F_ng^{-1}|}{|F_n|}<\varepsilon$$
This implies that $\lim_{n\to \infty}\frac{|F_ng\setminus F_n|}{|F_n|}\leq \varepsilon$. Since $\varepsilon$ was arbitrarily taken, we conclude that $\lim_{n\to \infty}\frac{|F_ng\setminus F_n|}{|F_n|}=0$.
\end{proof}

We will use the concept of \textit{left} $(K,\varepsilon)$-invariance as well: for finite subsets $F,K\subseteq G$ and $\varepsilon>0$, we say that $F$ is left $(K,\varepsilon)$-invariant if 
$$\frac{|\{g\in F: Kg\subset F\}|}{|F|}\geq (1-\varepsilon)$$
This is the notion of invariance used in \cite{W}. Note that if $F$ is right $(K,\varepsilon)$-invariant, then $F^{-1}$ is left $(K^{-1},\varepsilon)$-invariant.
\begin{remark}
 {\rm  Every abelian group is amenable. In this case, every F\o lner sequence is left and right at the same time, and the notions of right and left $(K,\varepsilon)$-invariance coincide. }
 \end{remark}
A {\it left monotile} of $G$ is a finite subset $F$ of $G$ for which there exists a subset $C$ of  $G$ such that the collection $\T=\{cF: c\in C\}$ is a partition of $G$. In this case we say that $\T$ is a {\it left monotiling}. A {\it right monotile} of $G$ is a finite subset $F$ of $G$ for which there exists a subset $C$ of  $G$ such that the collection $\T=\{Fc: c\in C\}$ is a partition of $G$. In this case we say that $\T$ is a {\it right monotiling}. 
We say that $G$ is  {\it monotileable amenable} if  there exists a right F\o lner sequence $(F_n)_{n\geq 0}$ of $G$ such that every $F_n$ is a left monotile of $G$. Note that if $G$ is monotilable amenable, then $(F_n^{-1})_{n\geq 0}$ is a left F\o lner sequence whose elements are right monotiles of $G$.

\subsection{Congruent monotileable amenable groups.}

 \begin{defi}\label{defi2} Let $(F_n)_{n\geq 0}$ be a sequence of finite subsets of G. We say that  the sequence is congruent if $1_G\in F_0$ and for every $n\geq 0$ the exists a set $J_n\subseteq G$ such that $1_G\in J_n$ and  such that $\{cF_{n}: c\in J_n\}$ is a partition of $F_{n+1}$.\\
We say that  $G$ is congruent monotileable if it admits a congruent right F\o lner sequence made of left monotiles, which is also \textit{exhaustive}, that is, $G=\bigcup_{n\geq 0} F_n$.
  \end{defi}

\begin{lemma}\label{lema1} Let $G$ be a congruent monotilable amenable group with a right F\o lner sequence $(F_n)_{n\geq 0}$ made of congruent left monotiles.   Then  
for every  $m>n\geq 0$,  the collection 
$$
 \{c_{m-1}\cdots c_nF_n: c_i\in J_i, \mbox{ for every } n\leq i <m\}
$$
is a partition of $F_m$.
\end{lemma}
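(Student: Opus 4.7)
The plan is to proceed by induction on $m-n$, using the definition of a congruent sequence as the base step and exploiting the fact that left multiplication is a bijection on $G$ to pass from the partition at stage $m$ to the partition at stage $m+1$.

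For the base case $m=n+1$, the statement is exactly Definition \ref{defi2}: there exists $J_n \subseteq G$ with $\{c_n F_n : c_n \in J_n\}$ being a partition of $F_{n+1}$.

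For the inductive step, assume for some fixed $n$ and some $m>n$ that
$$\{c_{m-1}\cdots c_n F_n : c_i \in J_i,\ n\leq i < m\}$$
is a partition of $F_m$. By the definition of congruent, $\{c_m F_m : c_m \in J_m\}$ is a partition of $F_{m+1}$. Substituting the inductive description of $F_m$ into this union, I get
$$F_{m+1} \;=\; \bigsqcup_{c_m \in J_m} c_m F_m \;=\; \bigcup_{c_m \in J_m}\; \bigcup_{c_i \in J_i,\ n\leq i<m} c_m c_{m-1}\cdots c_n F_n.$$
It then remains to verify that the resulting collection is a genuine partition, i.e., that the sets $c_m c_{m-1}\cdots c_n F_n$ are pairwise disjoint as the indices $(c_m,\ldots,c_n)$ vary. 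If two such sets (with indices $(c_m,\ldots,c_n)$ and $(c_m',\ldots,c_n')$) intersect, then in particular their images lie in $c_m F_m \cap c_m' F_m$, forcing $c_m = c_m'$ by the partition property of $\{c_m F_m : c_m \in J_m\}$. Multiplying by $c_m^{-1}$ (a bijection of $G$) then gives a nonempty intersection of $c_{m-1}\cdots c_n F_n$ and $c_{m-1}'\cdots c_n' F_n$ inside $F_m$, and the inductive hypothesis forces the remaining indices to coincide.

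There is no real obstacle here; the only point requiring slight care is the second paragraph of the inductive step, where one must use that left translation is a bijection to transfer disjointness from stage $m+1$ back to stage $m$. The rest is bookkeeping.
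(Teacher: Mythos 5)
Your proof is correct and is precisely the induction on $m-n$ that the paper leaves implicit, as its own proof of Lemma \ref{lema1} consists only of the remark that the claim ``follows directly from Definition \ref{defi2} by using induction.'' The disjointness argument via $c_mF_m\cap c_m'F_m\neq\emptyset$ and left translation is the right way to fill in that omitted step.
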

\begin{proof}  The proof follows directly from Definition \ref{defi2} by using induction.
\end{proof}

The next lemma is the key tool to show that countable abelian and nilpotent groups are congruent monotilable. If $M$ is any set, $\sim$ is an equivalence relation on $M$, $\pi:M\to M/\sim$ is the canonical projection and $H\subseteq M/\sim$, we say that $\widehat{H}\subseteq M$ is a \textit{lifting} of $H$ if $\pi(\widehat{H})=H$ and $\pi$ is one-to-one in $\widehat{H}$. 

\begin{lemma}\label{key-lemma}
Let $L$, $G$ and $Q$ be countable discrete amenable groups such that $1\to L\to G\to Q\to 1$ is an exact sequence. Suppose $L$ and $Q$ have congruent and exhaustive right F\o lner sequences made of left monotiles, $(U_s)_{s\geq 0}$ and $(T_s)_{s\geq 0}$ respectively.  Then, $G$ has an exhaustive right F\o lner sequence made of left monotiles. More precisely, there exists a sequence $(\widehat{T}_s)_{s\geq 0}$, such that each $\widehat{T}_s\subseteq G$ is a lifting of $T_s$, and an increasing sequence of indices, $(m_s)_{s\geq 0}$, such that $F_s=U_{m_s}\widehat{T}_s$ defines an exhaustive right F\o lner sequence made of letf monotiles of G. If in adittion $L\subseteq Z(G)$, then $(F_s)_{s\geq 0}$ is also congruent.
\end{lemma}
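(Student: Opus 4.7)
The plan is to build the lifts $\widehat{T}_s$ one at a time, to make the left monotile property automatic from the exactness of the sequence, and to use the freedom in choosing $m_s$ to absorb the failure of the set-theoretic section $T_s\hookrightarrow G$ to respect multiplication.

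First I would inductively choose lifts $\widehat{T}_s\subseteq G$ of $T_s$ with $1_G\in\widehat{T}_s$ and $\widehat{T}_s\subseteq\widehat{T}_{s+1}$. When $L\subseteq Z(G)$, I additionally arrange $\widehat{T}_{s+1}=\bigsqcup_{b\in B_s}\widehat{b}\,\widehat{T}_s$, where $T_{s+1}=\bigsqcup_{b\in B_s}bT_s$ is the congruent decomposition provided by the hypothesis (with $1_Q\in B_s$); this is achieved by fixing a lift $\widehat{b}\in G$ of each $b\in B_s$ with $\widehat{1_Q}=1_G$ and defining $\widehat{T}_{s+1}$ as the image of $B_s\times\widehat{T}_s$ under multiplication. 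Next, for any choice of $m_s$, I would verify that $F_s=U_{m_s}\widehat{T}_s$ is a left monotile of $G$: for $g\in G$, the congruent tiling of $Q$ provides a unique $d$ in some tile index set and $t\in T_s$ with $\pi(g)=dt$; setting $\ell_0=\widehat{d}^{-1}g\widehat{t}^{-1}$, one checks $\pi(\ell_0)=1_Q$ so $\ell_0\in L$, and the left monotile property of $U_{m_s}$ in $L$ gives a unique factorisation $\ell_0=cu$ with $c$ in the tile index set of $L$ and $u\in U_{m_s}$. Thus $g=(\widehat{d}c)\cdot(u\widehat{t})$ with $u\widehat{t}\in F_s$, and uniqueness of $(d,t,c,u)$ shows that the translates $\widehat{d}c\cdot F_s$ tile $G$.

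The heart of the argument is the choice of $m_s$. I would enumerate a nested family $K_s\subseteq G$ of finite sets with $\bigcup_sK_s=G$ and pick $\varepsilon_s\downarrow 0$. For $s$ large enough that $T_s$ is right $(\pi(K_s),\varepsilon_s/2)$-invariant (by Lemma \ref{invariance}), I extract the large subset $T_s^*\subseteq T_s$ of those $t$ with $t\pi(K_s)\subseteq T_s$. For each such $t$, each $g\in K_s$, and the designated lift $\widehat{t}\in\widehat{T}_s$, I let $\widehat{tq}\in\widehat{T}_s$ be the chosen lift of $t\pi(g)\in T_s$ and define the correction
\[
\ell(\widehat{t},g)=\widehat{t}\,g\,\widehat{tq}^{-1}\in L.
\]
The finite set $K'_s=\{\ell(\widehat{t},g):t\in T_s^*,\ g\in K_s\}\subseteq L$ is the obstruction I must absorb: by the right F\o lner property of $(U_n)_n$ in $L$ I can choose $m_s$ so that $U_{m_s}$ is right $(K'_s,\varepsilon_s/2)$-invariant, and then every $u\widehat{t}\in F_s$ with $u$ in the invariant part of $U_{m_s}$ and $t\in T_s^*$ satisfies $u\widehat{t}\cdot K_s\subseteq F_s$. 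Such pairs comprise at least a $(1-\varepsilon_s)$-fraction of $F_s$, so $(F_s)_{s\geq 0}$ is a right F\o lner sequence.

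When $L\subseteq Z(G)$, congruence of $(F_s)$ follows almost for free: writing $U_{m_{s+1}}=\bigsqcup_{a\in A_s}aU_{m_s}$ (from congruence of $(U_n)$, with $1_L\in A_s$) and using $U_{m_s}\widehat{b}=\widehat{b}U_{m_s}$ by centrality of $L$, I obtain
\[
F_{s+1}=\bigsqcup_{a\in A_s,\,b\in B_s}(a\widehat{b})\cdot F_s,
\]
with $1_G=1_L\cdot\widehat{1_Q}$ in the index set; disjointness is checked by projecting to $Q$ to isolate $b$ and then using the tiling of $L$ to isolate $a$. Exhaustivity of $(F_s)$ holds because the nested lifts make the chosen lift of $\pi(g)$ eventually constant while $(U_n)$ is exhaustive in $L$. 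The main obstacle throughout is the technical step above: right multiplication on $G$ does not preserve the factorisation $U_{m_s}\cdot\widehat{T}_s$, introducing an $L$-valued error depending on both $\widehat{t}$ and the multiplier, and the construction succeeds only because this error set $K'_s$ is finite at each stage and can be absorbed by taking $m_s$ much larger than the natural scale at stage $s$.
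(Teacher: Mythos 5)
Your proof is correct, and while it follows the paper's overall architecture (nested lifts $\widehat{T}_s$ of $T_s$, sets $F_s=U_{m_s}\widehat{T}_s$ with $m_s$ chosen by F\o lner invariance of $(U_n)$, centrality of $L$ for congruence, and the same exhaustivity argument), it differs in the key step. The paper does not verify directly that $F_s$ is an invariant left monotile of $G$: it passes to inverses and invokes Weiss's Theorem 2 of \cite{W} as a black box, which supplies a finite set $J\subseteq L$ such that any sufficiently $J$-invariant right monotile $U^{-1}$ of $L$ makes $\widehat{T}_s^{-1}U^{-1}$ an invariant right monotile of $G$. You instead reprove that statement in this setting: the tiling of $G$ by $\{\widehat{d}\,c\,F_s\}$ via the unique factorisation $g=\widehat{d}\,c\,u\,\widehat{t}$ through the exact sequence, and the F\o lner estimate via the finite correction set $K'_s=\{\widehat{t}\,g\,\widehat{tq}^{\,-1}\}\subseteq L$ (the cocycle measuring the failure of the section to be multiplicative), absorbed by taking $U_{m_s}$ right $(K'_s,\varepsilon_s/2)$-invariant. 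This buys a self-contained proof at the cost of redoing \cite{W}; the counting $(1-\varepsilon_s/2)^2\geq 1-\varepsilon_s$ using $|F_s|=|U_{m_s}|\,|T_s|$ and the uniqueness checks by projecting to $Q$ and then tiling $L$ all go through. Two small points you should make explicit: the passage to a subsequence of $(T_s)$ to get $(\pi(K_s),\varepsilon_s/2)$-invariance preserves congruence by Lemma \ref{lema1}, and $m_s$ should also be required to exceed $m_{s-1}$ so that $\bigcup_s U_{m_s}=L$ in the exhaustivity step.
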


\begin{proof}
Let $(K_s)_{s\geq 0}$ be an increasing sequence of finite subsets of $G$ such that $G=\bigcup_{s\geq 0}K_s$. Let $(\varepsilon_s)_{s\geq 0}$ be a decreasing to zero sequence of positive reals. Let $\pi:G\to G/L\cong Q$ be the projection of $G$ on $G/L$. Since $(T_s)_{s\geq 0}$ is a right F\o lner sequence, lemma \ref{invariance} tells us that, up to take an increasing subsequence, we can assume that for all $s\geq 0$, $T_s$ is right $(\pi(K_s),\varepsilon_s/2)$-invariant. \\
We proceed by induction for de definition of $(\widehat{T}_s)_{s\geq 0}$ and $(m_s)_{s\geq 0}$. For $s=0$, define $\widehat{T}_0=1_G$ and $m_0=0$, so that $1_G\in F_0=U_0$.\\
If $s>0$, suppose we have defined $\widehat{T}_{s-1}$ and $m_{s-1}$. By hypothesis, there exists a finite subset $D\subseteq Q$ containing $1_Q$ such that 
$$T_s=\bigsqcup_{d\in D}dT_{s-1}$$
Take any lifting $\widehat{D}$ of $D$, and define $\widehat{T}_s=\bigcup_{\hat{d}\in \widehat{D}}\hat{d}\widehat{T}_{s-1}$. It is straightforward to show that $\widehat{T}_s$ is a lifting of $T_s$ and that   
$\widehat{T}_s=\bigsqcup_{\hat{d}\in \widehat{D}}\hat{d}\widehat{T}_{s-1}$.\\
Note that $T_s^{-1}$ is a left $(\pi(K_s^{-1}),\varepsilon_s/2)$-invariant right monotile of $Q$ and that $\widehat{T}_s^{-1}$ is a lifting of $T_s^{-1}$. Then, by \cite{W}, Theorem 2, there exists a subset $J\subseteq L$ such that, if $U\subseteq L$ is a left $(J,\varepsilon_s/2)$-invariant right monotile of $L$, then $\widehat{T}_s^{-1}U$ is a left $(K_s^{-1},\varepsilon_s)$-invariant right monotile of $G$.\\
Now, every $U_s$ is a left monotile of $L$ and therefore every $U_s^{-1}$ is a right monotile of $L$. Since $(U_s^{-1})_{s\geq 0}$ is a letf F\o lner sequence, the $U_s^{-1}$'s are as much left invariant as we want. Pick an index $m_s>m_{s-1}$ such that $U_{m_s}^{-1}$ is left $(J,\varepsilon_s/2)$-invariant. By \cite{W}, Theorem 2, $\widehat{T}_s^{-1}U_{m_s}^{-1}$ is a left $(K_s^{-1},\varepsilon_s)$-invariant right monotile of $G$, and therefore $F_s:=U_{m_s}\widehat{T}_s$ is a right $(K_s,\varepsilon_s)$-invariant left monotile of $G$. This shows that $(F_s)_{s\geq 0}$ is a right F\o lner sequence made  of left monotiles of $G$.\\
Let us show that $(F_s)_{s\geq 0}$ is exhaustive. Let $g\in G$, then there is a $s_0$ such that $\pi(g)\in T_{s_0}$, since $(T_s)_{s\geq 0}$ is exhaustive. Then, there exist $l\in L$ and $\hat{t}\in \widehat{T}_{s_0}$ such that $g=l\hat{t}$. For $s\geq 0$ big enough, $l\in U_{m_s}$ and $\hat{t}\in \widehat{T}_s$, therefore $g\in F_s$.\\
Finally, let us see that, if $L\subseteq Z(G)$, $(F_s)_{s\geq 0}$ is congruent. Let $s\in \mathbb{N}$. Since $(m_s)_{s\geq 0}$ is increasing and $(U_s)_{s\geq 0}$ is congruent, lemma \ref{lema1} tells us that there exists a finite subset of $L$, $C$, such that
$$U_{m_{s+1}}=\bigsqcup_{c\in C}cU_{m_s}$$
On the other hand, by construction, there is a set $\widehat{E}\subseteq G$ such that
$$\widehat{T}_{s+1}=\bigsqcup_{\hat{e}\in \widehat{E}} \hat{e}\widehat{T}_s$$
Therefore,
\begin{eqnarray*}
  F_{s+1}  &=& \bigsqcup_{c\in C}cU_{m_s}\bigsqcup_{\hat{e}\in\widehat{E}}\hat{e}\widehat{T}_s \\
           &=& \bigsqcup_{c\in C}\bigsqcup_{\hat{e}\in\widehat{E}}cU_{m_s}\hat{e}\widehat{T}_s\\
           &=& \bigsqcup_{c\in C}\bigsqcup_{\hat{e}\in\widehat{E}}c\hat{e}\underbrace{U_{m_s}\widehat{T}_s}_{=F_s}\\
\end{eqnarray*}
  
\end{proof}

\begin{remark}\label{virtually}
{\rm If $G$ is a countable amenable group having a finite index subgroup which is congruent monotilable, then so is $G$. Indeed, it is not difficult to see that, if $L$ is a congruent monotilable finite index subgroup of $G$, $R$ is a subset of right coset representatives, and $(U_n)_{n\geq 0}$ is a congruent right F{\o}lner sequence made of left monotiles of $L$, then the sequence $F_n:=U_nR$ defines a congruent right F{\o}lner sequence of $G$. This implies that any virtually congruent monotilable group is congruent monotilable as well. }
\end{remark}

\begin{prop}\label{abelianos}
Every countable abelian group is congruent monotilable.
\end{prop}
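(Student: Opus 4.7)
The plan is to prove the proposition in two stages, first handling finitely generated abelian groups by iterating Lemma \ref{key-lemma}, and then reducing the general case by a diagonal construction over an exhaustive chain of finitely generated subgroups. The central observation that makes everything go through is that, in an abelian group, every subgroup is central, so the hypothesis $L \subseteq Z(G)$ in Lemma \ref{key-lemma} is automatic and the lemma may be applied freely.

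For the finitely generated case, I would invoke the structure theorem to reduce to groups of the form $\ZZ^d \oplus F$ with $F$ finite, and iterate Lemma \ref{key-lemma} on the obvious extensions. Finite abelian groups are trivially congruent monotileable via the constant sequence $F_n = F$. For $\ZZ$, I would exhibit an explicit sequence of nested integer intervals: start with $F_0 = \{0\}$ and at each step either concatenate $|F_n| + F_n$ on the right or $-|F_n| + F_n$ on the left, alternating the two moves so that $\bigcup_n F_n = \ZZ$. Each $F_n$ is an integer interval, hence a left monotile of $\ZZ$, and the partitions $F_{n+1} = F_n \sqcup (\pm|F_n| + F_n)$ witness both congruence (with $0 \in J_n$) and exhaustion.

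For a general countable abelian $G$, fix an enumeration $\{g_k\}_{k \geq 1}$ of $G$ and set $H_0 = \{1_G\}$, $H_n = \langle g_1, \ldots, g_n \rangle$; then $G = \bigcup_n H_n$, every $H_n$ is finitely generated, and each quotient $H_n/H_{n-1}$ is cyclic (hence congruent monotileable by the first stage). Apply Lemma \ref{key-lemma} inductively to the exact sequences $1 \to H_{n-1} \to H_n \to H_n/H_{n-1} \to 1$ to build, for every $n$, a congruent exhaustive right F\o lner sequence $(U_s^{(n)})_{s \geq 0}$ of left monotiles of $H_n$, with $U_s^{(n)} = U_{m_s^{(n)}}^{(n-1)} \hat{T}_s^{(n)}$, where $\hat{T}_s^{(n)}$ lifts the F\o lner sequence of $H_n/H_{n-1}$ and contains $1_G$, and where $m_s^{(n)} \geq s$ (because these indices are strictly increasing in $s$ with $m_0^{(n)} = 0$). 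Combining these two facts gives the nesting $U_s^{(n-1)} \subseteq U_s^{(n)}$, with $U_s^{(n)}$ a disjoint union of translates of $U_s^{(n-1)}$ by elements of $H_n$.

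Finally, I would diagonalize by setting $F_n := U_n^{(n)}$. Transitivity of the congruent refinement relation, both within each $H_n$ (as $s$ grows) and across $n$ (as above), forces $F_n \subseteq F_{n+1}$ and realizes $F_{n+1}$ as a disjoint union of translates of $F_n$; each $F_n$ is a left monotile of $H_n$ and hence of $G$, and exhaustion is immediate since any $g \in H_{n_0}$ lies in some $U_{s_0}^{(n_0)}$ and therefore in $F_n$ whenever $n \geq \max(n_0, s_0)$. The main obstacle is verifying the F\o lner property of $(F_n)$ as a sequence in $G$: this forces one to coordinate the parameters $(K_s, \varepsilon_s)$ used at every application of Lemma \ref{key-lemma} so that the diagonal element $U_n^{(n)}$ is $(K_n, \varepsilon_n)$-invariant with respect to an exhausting sequence $K_n \uparrow G$ and a sequence $\varepsilon_n \searrow 0$, which requires careful bookkeeping across the layers of the inductive construction.
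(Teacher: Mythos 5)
Your proof is correct and follows essentially the same route as the paper: an exhausting chain of finitely generated subgroups with cyclic successive quotients, inductive application of Lemma \ref{key-lemma} (available because every subgroup of an abelian group is central), and a diagonal extraction whose F\o lner property is secured by coordinating the $(K_n,\varepsilon_n)$-invariance across the layers of the induction. The only cosmetic difference is the base case: you treat finitely generated abelian groups via the structure theorem and an explicit interval construction for $\ZZ$, whereas the paper notes that such groups are residually finite and invokes \cite[Lemma 4]{CP}.
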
 
\begin{proof} We concentrate in the case of infinite countable abelian groups, since the finite case is trivial. Recall that if $G$ is a finitely generated abelian group, then it has polinomial growth and therefore it is residually finite. Any such a group (provided it is amenable) is congruent monotilable (see for example \cite[Lemma 4]{CP}).\\

\noindent Let $G$ be a non-finitely generated abelian group, and enumerate its elements  as  $G=\{1_G=g_0,g_1,\cdots\}$. Since $G$ is non-finitely generated we can define an  increasing sequence $(k_n)_{n\geq 0}$ as follows:  let $k_0=0$ and for $n>0$ let define 
    $$
    k_{n}=\min\{l>k_{n-1}: g_{k_n}\notin \langle\{g_0,\cdots, g_{k_{n-1}}\}\rangle\}.
    $$
For every $n\geq 0$ we set  
$$K_n=\{g_0,g_1,\cdots,g_{k_n}\}  \mbox{ and } G_n=\langle K_n \rangle,$$
where $\langle \cdot \rangle$ denotes the generated subgroup in $G$.

Since $G$ is abelian, $G_{n-1}\lhd G_n$ and  $G_n/G_{n-1}$ is an abelian group. Moreover, this is a non trivial cyclic group.  Indeed, any class $gG_{n-1}$ has the form
$$gG_{n-1}=g_0^{l_0}\cdots g_{k_n-1}^{l_{k_n-1}}g_{k_n}^{l_{k_n}}G_{n-1} \mbox{ where } l_i\in \mathbb{Z}.$$
Since $g_0^{l_0}\cdots g_{k_n-1}^{l_{k_{n}-1}} \in G_{n-1}$ and $G_n$ is abelian, $gG_{n-1}=g_n^{l_{k_n}}G_{n-1}\neq G_{n-1}$, so that 
$$G_n/G_{n-1}=\{g_{k_n}^kG_{n-1}:k\in\mathbb{Z}\}.$$
If for all $k\neq 0$ $g_{k_n}^k\notin G_{n-1}$, then $G_n/G_{n-1}\cong \mathbb{Z}$. If there is some $k\neq 0$ such that $g_{k_n}^k\in G_{n-1}$, then $G_n/G_{n-1}\cong \mathbb{Z}/l\mathbb{Z}$, where 
$$l:=\min\{k\in\mathbb{Z}^+:g_n^k\in G_{n-1}\}.$$
The rest of the proof is organized as follows:
\begin{itemize}
\item For every $n\geq 0$, we will inductively define $(F_s^n)_{s\geq 0}$ an exhaustive and congruent right F\o lner sequence made of left monotiles of $G_n$, a positive integer $m_{n-1}$ and a finite subset $F_{n-1}\subseteq G$.
\item  We will show that  $(F^n_{m_{n}})_{n\geq 0}$ is an exhaustive and congruent right F\o lner sequence of left monotiles of $G$.  
\end{itemize}
For $n=0$, define $F_s^{0}=\{1_G\}$, for every $s\geq 0$. This is an exhaustive and congruent right F\o lner sequence of left monotiles of $G_0$, because this group is trivial.  We also put $m_{-1}=0$ and $F_{-1}=\{1_G\}$.  \\
  
\noindent Let  $n>0$. Suppose we have defined an exhaustive and congruent right F\o lner sequence $(F^{n-1}_s)_{s\geq 0}$  of left monotiles of $G_{n-1}$, the positive integer $m_{n-2}$ and the subset $F_{n-2}$. Because $(F^{n-1}_s)_{s\geq 0}$ is exhaustive and $K_{n-1}$ does not depend on the parameter $s$,  we can assume  that $K_{n-1} \subseteq F^{n-1}_s$ and $F^{n-1}_s$ is right $(K_{n-1}, \varepsilon_{n-1})$-invariant, for every $s\geq 0$.\\   

\noindent Since $G_n/G_{n-1}$ is cyclic (and then residually finite), it admits an exhaustive and congruent right F\o lner sequence of left monotiles. Let us denote this sequence as  $(T^{n-1}_s)_{s\geq 0}$.\\
 
\noindent From Lemma \ref{key-lemma}, there exist a sequence of liftings, $(\widehat{T}_s^{n-1})$ and an increasing sequence of indices, $(m_{n-1,s})_{s\geq 0}$, such that $(F^{n-1}_{m_{n-1,s}}\widehat{T}^{n-1}_s)_{s\geq 0}$ is an exhaustive congruent right F\o lner sequence made of left monotiles of $G_n$.  We define
 $$
  F_s^n=F^{n-1}_{m_{n-1,s}}\widehat{T}^{n-1}_s \mbox{ for every } s\geq 0.
 $$
  
We can assume that for every $s\geq 0$, $m_{n-1,s}>m_{n-2}$, $K_n\subseteq F_s^n$ and that $F_s^n$ is right $(K_n, \varepsilon_n)$-invariant. We define $m_{n-1}=m_{n-1,0}$, and
$$
  F_{n-1}=F^{n-1}_{m_{n-1}}.
$$

\underline{Claim:} $(F_n)_{n\geq 0}$ is an exhaustive congruent right F\o lner sequence of $G$ made of left monotiles.\\
This sequence is right F\o lner because for all $n\geq 0$, $F_n=F^{n}_{m_{n,0}}$ is right $(K_{n}, \varepsilon_{n})$-invariant. 

Since $F_n=F^n_{m_{n,0}}$ is a left monotile of $G_n$, we have that $F_n$ is a left monotile of $G$. Indeed, if $C_n\subseteq G_n$ is such that $\{cF_n: c\in C_n\}$ is a partition of $G_n$, and $\Lambda_n\subseteq G$ is a lifting of $G/G_n$, then $\{gcF_n: g\in \Lambda_n, c\in C_n\}$ is a partition of $G$.
 
 We have that $F_n=F_{m_n}^n=F^{n}_{m_{n-1,m_n}}$. Since $(m_{n-1,s})_{s\geq 0}$ is increasing, and $(F_s^n)_{s\geq 0}$ is congruent, every $F_{m_{n-1,s}}^{n-1}$ is a disjoint union of translated copies of  $F_{m_{n-1,0}}^{n-1}=F_{n-1}$.  This together with the fact that  all the elements in  $\widehat{T}^{n-1}_{m_n}$ are  in diferent classes of $G_n/G_{n-1}$, imply that $F_n$ is a disjoint union of translated copies of $F_{n-1}$. 
  \end{proof}

\begin{proof}[Proof of Theorem 1] Suppose $G$ is a countable nilpotent group. Again recall that if $G$ is finitely generated, it is residually finite and then congruent monotilable by the results in \cite{CP}. If $G$ is not finitely generated, we use induction on the nilpotency class of the group. If $G$ is of class $1$, then $G$ is abelian and the result follows from proposition \ref{abelianos}. If its nilpotency class is $2$, then consider the exact sequence
$$1\to[G,G]\to G\to G/[G,G]\to 1$$
where $[G,G]$ denotes the commutator subgroup of $G$. Since both $[G,G]$ and $G/[G,G]$ are abelian, each has a congruent right F{\o}lner sequence made of left monotiles. Since $[G,G]\leqslant Z(G)$, we deduce from Lemma \ref{key-lemma} that $G$ is congruent monotilable.\\
If $G$ has nilpotency class $n$ grater than $2$, consider the exact sequence
$$1\to G^{n-1}\to G\to G/G^{n-1}\to 1$$ 
where $G^i$ denotes the $i$-th subgroup in the lower central series of $G$. Since $G^{n-1}\leqslant Z(G)$, it is abelian, and therefore it follows from proposition \ref{abelianos} that $G^{n-1}$ is congruent monotilable. On the other hand, $G/G^{n-1}$ is a group of nilpotency class $n-1$ (this follows from the fact that for all $0\leq i\leq n-1$, $(G/G^{n-1})^i=G^i/G^{n-1}$), and then by inductive hypothesis it is congruent monotilable as well. Since $G^{n-1}\leqslant Z(G)$, we deduce from Lemma \ref{key-lemma} that $G$ is congruent monotilable. This proves that any countable nilpotent group is congruent monotilable.\\ 
From the argument above and remark \ref{virtually}, we deduce that every countable virtually nilpotent group is congruent monotilable. 
\end{proof}


\section{Invariant measures  and group  actions on the Cantor set.}\label{section-construction}
In this section we assume that $G$ is a congruent monotilable  amenable group with a right F\o lner sequence $(F_n)_{n\geq 0}$ made of congruent left monotiles.  We use the notation of Definition \ref{defi2}.

\subsection{Subshifts.}
Let $\Sigma$ be a finite alphabet.  By $\Sigma^G$ we mean the set of all the
functions from $G$ to $\Sigma$. The (left) {\it shift action} $\sigma$
of $G$ on $\Sigma^{G}$ is given by  $\sigma^g(x)(h)=x(g^{-1}h)$, for every $h\in G$ and $x\in \Sigma^G$.
We consider $\Sigma$ endowed with the discrete topology and
$\Sigma^{G}$ with the product topology. Thus every $\sigma^g$ is a
homeomorphism of the Cantor set $\Sigma^G$.  A {\it subshift } or $G$-subshift of $\Sigma^{G}$ is a closed subset
$X$ of $\Sigma^{G}$ which is invariant by the shift action.
The triple $(X,\sigma|_X, G)$ is also called
subshift or $G$-subshift. See \cite{CC} for more details.

The subshift $(X,\sigma|_X, G)$ is  {\it minimal} if  for every $x\in X$  the orbit
$o_{\sigma}(x)=\{\sigma^g(x): g\in G\}$ is dense in $X$. We say that $(X,\sigma|_X, G)$ is {\it free} on $A\subseteq X$ if $\sigma^g(x)=x$ implies $g=1_G$, for every $x\in A$.  If $A=X$ we just say that the subshift  is free.

\subsection{Construction of a minimal  $G$-subshift $(X, \sigma|_X, G)$.  }

\begin{remark}{\rm Here we use the principal ideas introduced in  \cite{CP}. Nevertheless, unlike \cite{CP}, in the subshifts of this article  there are not  finite index subgroups of $G$ as set of return times to clopen sets. This will change the strategies of the proofs and the properties of the  $G$-subshifts that we construct. }
\end{remark}

Let $k_0\geq 3$ be an integer and let  $\Sigma=\{0,\cdots, k_0\}$. For every $1\leq k\leq k_0$ let define $B_{0,k}\in \Sigma^{F_0}$ as
$$
B_{0,k}(v)= \left\{\begin{array}{ll}
                                            k  &  \mbox{ if } v=1_G\\
                                            0  & \mbox{ if } v\in F_0\setminus\{1_G\}.
                                            \end{array}\right. 
$$ 

For $n\geq 0$, let $k_{n+1}\geq 3$ be an integer and let $B_{n+1,1},\cdots, B_{n+1,k_{n+1}}$ be different elements in $\Sigma^{F_{n+1}}$ verifying the following conditions:
\begin{itemize}
\item[(C1)] $B_{n+1,k}(F_n)=B_{n,1}$, for every $1\leq k\leq k_{n+1}$.

\item[(C2)] $B_{n+1,k}(cF_n)\in \{B_{n,2},\cdots, B_{n,k_n}\}$ for every $c\in J_n\setminus \{1_G\}$. 
\end{itemize}

\begin{lemma}\label{partition}
Let $(B_{n,1},\cdots, B_{n,k_n})_{n\geq 0}$ be  the sequence  defined above. Then for every $n\geq 0$ we have the following:
\begin{itemize}
\item[(C3)] If $g\in F_n$ and $1\leq k,k'\leq k_n$ are such that $B_{n,k}(gv)=B_{n,k'}(v)$ for every $v\in F_n\cap g^{-1}F_n$, then $g=1_G$ and $k=k'$.
\end{itemize}
\end{lemma}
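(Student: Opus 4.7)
The plan is to prove (C3) by induction on $n$, exploiting the recursive structure of the blocks given by (C1) and (C2) together with the partition $F_{n+1}=\bigsqcup_{c\in J_n} cF_n$ coming from Definition \ref{defi2}.

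For the base case $n=0$, the defining formula gives $B_{0,k}(1_G)=k\neq 0$ and $B_{0,k}(v)=0$ for $v\in F_0\setminus\{1_G\}$. Since $g\in F_0$, we have $1_G\in F_0\cap g^{-1}F_0$, so evaluating the hypothesis at $v=1_G$ yields $B_{0,k}(g)=k'\neq 0$, which forces $g=1_G$ and then $k=k'$.

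For the inductive step, assume (C3) at level $n$. Given $g\in F_{n+1}$ and $1\leq k,k'\leq k_{n+1}$ satisfying the hypothesis, use the partition $F_{n+1}=\bigsqcup_{c\in J_n}cF_n$ to write uniquely $g=c_0 h_0$ with $c_0\in J_n$ and $h_0\in F_n$. Restrict attention to positions $v\in F_n\cap h_0^{-1}F_n$; then $v\in F_n\subseteq F_{n+1}$ and $gv=c_0(h_0v)\in c_0F_n\subseteq F_{n+1}$, so the assumption applies at each such $v$. By (C1) one has $B_{n+1,k'}(v)=B_{n,1}(v)$, and if $j_0\in\{1,\dots,k_n\}$ denotes the index determined by $B_{n+1,k}(c_0u)=B_{n,j_0}(u)$ for $u\in F_n$, the equation becomes
$$
B_{n,j_0}(h_0 v)=B_{n,1}(v) \text{ for all } v\in F_n\cap h_0^{-1}F_n.
$$
The inductive hypothesis applied to $h_0$, $j_0$ and $1$ forces $h_0=1_G$ and $j_0=1$. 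By (C2), $j_0=1$ rules out $c_0\in J_n\setminus\{1_G\}$, since for such $c_0$ the corresponding block lies in $\{B_{n,2},\dots,B_{n,k_n}\}$, and these differ from $B_{n,1}$ by hypothesis. Hence $c_0=1_G$ as well, so $g=1_G$. The original equation then reads $B_{n+1,k}=B_{n+1,k'}$, and distinctness of the level-$(n+1)$ blocks gives $k=k'$.

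I do not expect a serious obstacle: the only point to watch is that the reduction at the inductive step yields an equation on exactly the domain $F_n\cap h_0^{-1}F_n$ that appears in the inductive statement at level $n$. This alignment is guaranteed by the partition structure of Definition \ref{defi2}, which is precisely what ensures that shifting by an element of $F_{n+1}$ decomposes cleanly as an $F_n$-translate composed with an element of $J_n$.
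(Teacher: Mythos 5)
Your proof is correct and follows essentially the same route as the paper's: the same decomposition $g=c_0h_0$ with $c_0\in J_n$, $h_0\in F_n$ via the congruence partition, reduction to the level-$n$ statement on $F_n\cap h_0^{-1}F_n$ using (C1) and (C2), and the induction hypothesis forcing $h_0=1_G$ and the index to be $1$, whence $c_0=1_G$. You have also usefully filled in the base case and the domain-alignment check that the paper leaves implicit.
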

\begin{proof}
The case $n=0$ is clear.

\medskip

Suppose the hypothesis is true for $n\geq 0$. Let $g\in F_{n+1}$ and $1\leq k,k'\leq k_{n+1}$ be such that
$$
 B_{n+1,k}(gv)=B_{n+1,k'}(v) \mbox{ for every } v\in F_{n+1}\cap g^{-1}F_{n+1}.
$$
Let $c_n\in J_n$ and $s\in F_n$ be such that $g=c_ns$. Since $c_nF_n\subseteq F_{n+1}$, conditions (C1) and (C2) imply
$$
B_{n,l}(su)=B_{n,1}(u) \mbox{ for every } u\in F_{n}\cap s^{-1}F_{n}, 
$$
where $1\leq l \leq k_n$ is such that $B_{n+1,k}(c_nF_n)=B_{n,l}$. By hypothesis we get $s=1$ and $l=1$. Conditions (C1) and (C2) imply $c_n=1_G$. From this we deduce $g=1_G$ and $k=k'$. 
\end{proof}

\begin{lemma}\label{unique-element}
Let $(B_{n,1},\cdots, B_{n,k_n})_{n\geq 0}$ be  the sequence  defined  above. Then 
  $$\bigcap_{n\geq 0}\{x\in \Sigma^G: x(F_n)=B_{n,1}\}=\{x_0\},$$ where $x_0$ is some element in $\Sigma^G$.
\end{lemma}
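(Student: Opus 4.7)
The plan is to exhibit the unique element $x_0\in\Sigma^G$ explicitly and then verify both membership in the intersection and uniqueness. The two structural facts I will lean on are (i) condition (C1), which says $B_{n+1,1}(F_n)=B_{n,1}$ and hence makes the sequence $(B_{n,1})_{n\ge 0}$ pairwise compatible, and (ii) the fact that a congruent monotileable F\o lner sequence $(F_n)_{n\ge 0}$ is both nested, since $1_G\in J_n$ forces $F_n\subseteq F_{n+1}$, and exhaustive, i.e.\ $G=\bigcup_{n\ge 0}F_n$ by the definition of congruent monotileability.

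First I would iterate (C1) to get that, for $n\le m$ and $g\in F_n$, one has $B_{m,1}(g)=B_{n,1}(g)$. This compatibility lets me unambiguously define
$$x_0(g):=B_{n,1}(g)\qquad\text{for any }n\ge 0\text{ with }g\in F_n,$$
the existence of such an $n$ being guaranteed by exhaustiveness and the independence on $n$ by the iterated (C1). By construction $x_0(F_n)=B_{n,1}$ for every $n\ge 0$, which shows that $x_0$ belongs to the intersection (and in particular that the intersection is non-empty; alternatively, non-emptiness follows from the fact that the sets $\{x\in\Sigma^G:x(F_n)=B_{n,1}\}$ form a decreasing chain of non-empty closed subsets of the compact space $\Sigma^G$).

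For uniqueness, given any $x$ in the intersection and any $g\in G$, I would pick $n$ with $g\in F_n$ and read off $x(g)=B_{n,1}(g)=x_0(g)$, whence $x=x_0$. The argument contains no real obstacle; the lemma is essentially a bookkeeping statement that packages the consistency (C1) together with the two structural features of a congruent monotiling—nesting $F_n\subseteq F_{n+1}$ and exhaustiveness $G=\bigcup_n F_n$—both of which are built into the setup and were already used implicitly in Lemma~\ref{partition}.
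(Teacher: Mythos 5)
Your proof is correct and follows essentially the same route as the paper: the paper also derives non-emptiness from (C1) together with compactness of the nested sets and uniqueness from exhaustiveness of $(F_n)_{n\geq 0}$. Your explicit construction of $x_0$ via the compatible restrictions $B_{n,1}$ is just a more hands-on phrasing of the same argument.
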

\begin{proof}
By Condition (C1) and because every set $\{x\in X: x(F_n)=B_{n,1}\}$ is compact, we have
 $$\bigcap_{n\geq 0}\{x\in \Sigma^G: x(F_n)=B_{n,1}\}\neq \emptyset.$$
Since the F\o lner sequence $(F_n)_{n\geq 0}$  is exhaustive, we deduce there exists only one element $x_0$ in this intersection.
\end{proof}
Let $x_0\in \Sigma^G$ be the element of Lemma \ref{unique-element}. Consider $X=\overline{\{\sigma^g(x_0): g\in G \}}$. For every $n\geq 0$ and $1\leq k\leq k_n$ we define
 $$
 C_{n,k}=\{x\in X: x(F_n)=B_{n,k}\} \mbox{ and } C_n=\bigcup_{k=1}^{k_n}C_{n,k}.
 $$

\begin{lemma}\label{segundo}
 Let $v\in G$. Then the following are equivalent:
 \begin{enumerate}
 \item $\sigma^{v^{-1}}(x_0)\in C_n$.
 \item  There exist $m > n$ and $c_i\in J_i$ for every $n\leq i<m$ such that\\   $v=c_{m-1}\cdots c_n$. \end{enumerate}
\end{lemma}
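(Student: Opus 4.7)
I would prove the two implications separately, both relying on the layered partition of $F_m$ given by Lemma~\ref{lema1} together with conditions (C1) and (C2) defining the blocks $B_{n,k}$. For (2)$\Rightarrow$(1), if $v=c_{m-1}\cdots c_n$ with $c_i\in J_i$, Lemma~\ref{lema1} yields $vF_n\subseteq F_m$, so $x_0(vw)=B_{m,1}(vw)$ for every $w\in F_n$, because $x_0|_{F_m}=B_{m,1}$. I would then iterate (C1) and (C2) down the tower: the map $u\mapsto B_{m,1}(c_{m-1}u)$ on $F_{m-1}$ equals some $B_{m-1,j_{m-1}}$, the map $u\mapsto B_{m-1,j_{m-1}}(c_{m-2}u)$ on $F_{m-2}$ equals some $B_{m-2,j_{m-2}}$, and so on. Composing these $m-n$ identifications gives $B_{m,1}(vw)=B_{n,j_n}(w)$ for some $1\leq j_n\leq k_n$ and all $w\in F_n$, so $\sigma^{v^{-1}}(x_0)\in C_{n,j_n}\subseteq C_n$.

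For (1)$\Rightarrow$(2), suppose $\sigma^{v^{-1}}(x_0)\in C_{n,k}$, i.e., $x_0(vw)=B_{n,k}(w)$ for every $w\in F_n$. Exhaustiveness of $(F_s)_{s\geq 0}$ lets me pick $m>n$ with $vF_n\subseteq F_m$, and Lemma~\ref{lema1} then provides a unique decomposition $v=c_{m-1}\cdots c_n s$ with $c_i\in J_i$ and $s\in F_n$; the task reduces to showing $s=1_G$. For $w\in F_n\cap s^{-1}F_n$ one has $sw\in F_n$ and $vw=c_{m-1}\cdots c_n(sw)$ lies in the single piece $c_{m-1}\cdots c_n F_n$ of the partition of $F_m$, so the same iteration of (C1)/(C2) identifies the restriction of $B_{m,1}$ to this piece with some $B_{n,k'}$, yielding $B_{m,1}(vw)=B_{n,k'}(sw)$. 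Combining this with $x_0(vw)=B_{m,1}(vw)=B_{n,k}(w)$ gives $B_{n,k'}(sw)=B_{n,k}(w)$ for every $w\in F_n\cap s^{-1}F_n$, and Lemma~\ref{partition} then forces $s=1_G$ and $k=k'$, so $v=c_{m-1}\cdots c_n$, as desired.

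The only real subtlety is arranging $m$ large enough that all of $vF_n$ (not only $v$) sits inside $F_m$; this is what legitimates replacing $x_0$ by $B_{m,1}$ on $vF_n$ and lets Lemma~\ref{lema1} place $v$ in a single piece of the partition. Once this is set up, the combinatorial content reduces to a routine unwinding of (C1), (C2), and (C3).
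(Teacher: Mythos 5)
Your proof is correct and follows essentially the same route as the paper: write $v=c_{m-1}\cdots c_n s$ with $s\in F_n$ via Lemma~\ref{lema1}, compare the two readings of $x_0$ on the piece $c_{m-1}\cdots c_nF_n$, and invoke Lemma~\ref{partition} to force $s=1_G$ (with the converse direction obtained by unwinding (C1)/(C2) down the tower, which the paper leaves as a one-line induction). The only cosmetic difference is that the paper takes $m$ minimal with $vF_n\subseteq F_m$ and rules out $m=n$ by a separate application of Lemma~\ref{partition}, whereas you simply choose some $m>n$ large enough; both are fine.
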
  
\begin{proof}

Suppose that $v\in G$ is such that  $\sigma^{v^{-1}}(x_0)\in C_n$. If $v=1_G$ then for $m=n+1$ and $c_n=1_G$ we get the desired property. Suppose now that  $v\neq 1_G$. let $m\geq 0$ be the smallest integer such that $vF_n\subseteq F_{m}$. Because $|vF_n|=|F_n|$, it is necessary that $m\geq n$. Suppose that $m=n$. Since $1_G\in F_n$ this implies that $v\in F_n$. By hypothesis  we have
 $$
 X_0(vs)=B_{n,l}(s) \mbox{ for every } s\in F_n.
 $$ 
On the other hand,
$$
x_0(vs)=B_{n,1}(vs) \mbox{ for every } s\in v^{-1}F_n.
$$ 
Lemma \ref{partition} implies that $v=1_G$, which is a contradiction. Thus we have $m > n$. 
 
 \medskip
 
Since $vF_n\subseteq F_m$ and $1_G\in F_m$, we have $v\in F_m$. Lemma \ref{lema1}  implies for every $n\leq i\leq m-1$ there exist $c_i\in J_i$ such that  $v\in c_{m-1}\cdots c_nF_n$. Let $s\in F_n$ such that $v=c_{m-1}\cdots c_n s$.  By definition of $x_0$, we have
 $$
 x_0(c_{m-1}\cdots c_nF_n)=B_{n,k}, \mbox{ for some } 1\leq k\leq k_n,
 $$
which implies that for every $g\in s^{-1}F_n$,
$$
x_0(vg)=x_0(c_{m-1}\cdots c_n sg)=B_{n,k}(sg).
$$
On the other hand, for every $g\in F_n$ we have
$$
x_0(vg)=B_{n,l}(g).
$$
Thus 
$$
B_{n,l}(g)=B_{n,k}(sg) \mbox{ for every } g\in F_n\cap s^{-1}F_n,
$$
Lemma \ref{partition} implies that $s=1_G$ and then $v=c_{m-1}\cdots c_n$.

\medskip

 By induction and Lemma \ref{lema1}, we get that $v=c_{m-1}\cdots c_n$ is a return time of $x_0$ to $C_n$. 
 \end{proof}

\begin{lemma}
For every $n\geq 0$, consider the collection of sets  
 $$
 \P_n=\{\sigma^{v^{-1}}(C_{n,k}): v\in F_n, 1\leq k\leq k_n\},
 $$
 where $C_{n,k}$ is defined as above. Then for every $n\geq 0$,
 \begin{itemize}
 \item[(KR1)] $\P_n$ clopen partitions of  $X$.
 \item[(KR2)]  $\P_{n+1}$ is finer than $\P_n$.
\end{itemize}
 \end{lemma}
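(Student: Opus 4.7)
The plan is as follows. Each $C_{n,k}$ is a cylinder set determined by fixing values on the finite set $F_n$, so it is clopen in $X$, and therefore every translate $\sigma^{v^{-1}}(C_{n,k})$ is clopen as well. To prove (KR1) I will first check the partition property on the dense orbit $o_\sigma(x_0)$ using Lemmas \ref{lema1} and \ref{segundo}, and then transfer it to all of $X$ by density combined with the clopenness of the sets involved.

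For $g \in G$, I observe that $\sigma^g(x_0) \in \sigma^{v^{-1}}(C_{n,k})$ is equivalent to $\sigma^{vg}(x_0) \in C_{n,k}$. Applying Lemma \ref{segundo} to $(vg)^{-1}$, this holds if and only if $(vg)^{-1} = c_{m-1}\cdots c_n$ for some $m>n$ and $c_i \in J_i$, equivalently $g^{-1} = c_{m-1}\cdots c_n\, v$ with $v \in F_n$. Exhaustiveness of $(F_n)_{n\geq 0}$ gives $g^{-1} \in F_m$ for every sufficiently large $m$, and Lemma \ref{lema1} then furnishes a unique decomposition of $g^{-1}$ of the above form. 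Since $1_G \in J_m$, the factor $v \in F_n$ does not depend on the choice of $m$, and the index $k$ is prescribed by the identity $B_{n,k} = x_0(c_{m-1}\cdots c_n F_n)$. Thus each orbit point lies in exactly one $\sigma^{v^{-1}}(C_{n,k})$.

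To extend to an arbitrary $y \in X$, I will approximate $y = \lim \sigma^{g_i}(x_0)$ and, using that $F_n \times \{1,\ldots,k_n\}$ is finite, pass to a subsequence along which the corresponding pairs $(v_i,k_i)$ are constantly equal to some $(v,k)$; closedness of $\sigma^{v^{-1}}(C_{n,k})$ then yields $y \in \sigma^{v^{-1}}(C_{n,k})$, so the family covers $X$. If $y$ belonged to two distinct sets $\sigma^{v^{-1}}(C_{n,k})$ and $\sigma^{v'^{-1}}(C_{n,k'})$, their intersection would be an open neighborhood of $y$ and density of the orbit would supply some $\sigma^g(x_0)$ inside it, contradicting the uniqueness just proved; this completes (KR1).

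For (KR2), I take $w \in F_{n+1}$ and $1 \leq l \leq k_{n+1}$, and use Lemma \ref{lema1} to write $w = c_n s$ uniquely with $c_n \in J_n$ and $s \in F_n$. Conditions (C1) and (C2) yield some $k \in \{1,\ldots,k_n\}$ with $B_{n+1,l}(c_n u) = B_{n,k}(u)$ for every $u \in F_n$. For $y \in \sigma^{w^{-1}}(C_{n+1,l})$ and $u \in F_n$ a direct computation gives $y(s^{-1}u) = y(w^{-1}c_n u) = \sigma^w(y)(c_n u) = B_{n+1,l}(c_n u) = B_{n,k}(u)$, so $\sigma^s(y) \in C_{n,k}$ and hence $\sigma^{w^{-1}}(C_{n+1,l}) \subseteq \sigma^{s^{-1}}(C_{n,k})$ with $s \in F_n$, as required. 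The delicate step is the disjointness half of (KR1): since for $v, v' \in F_n$ the product $v'v^{-1}$ need not lie in $F_n$, Lemma \ref{partition} cannot be invoked directly, and this is precisely why the density/openness detour through the orbit is necessary.
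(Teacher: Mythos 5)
Your proof is correct and follows essentially the same route as the paper: membership of orbit points in the sets of $\P_n$ is characterized via Lemma \ref{segundo}, uniqueness of the decomposition from Lemma \ref{lema1} gives disjointness and covering on the dense orbit, and clopenness plus density transfers both to all of $X$, with (KR2) coming from (C1)--(C2). You merely reorganize the argument (uniqueness on the orbit first, then the density/openness transfer) and spell out details the paper leaves implicit, such as the passage from covering the orbit to covering $X$ and the explicit computation behind (KR2).
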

\begin{proof}

(KR1)  Suppose that $v,u\in F_n$ and $1\leq k,l\leq k_n$ are such that
$$
\sigma^{v^{-1}}(C_{n,k})\cap \sigma^{u^{-1}}(C_{n,l})\neq \emptyset.
$$ 
Then there exists $x\in X$ such that
$$
x\in \sigma^{uv^{-1}}(C_{n,k})\cap C_{n,l}. 
$$
Since $x\in X$, there exists a sequence $(g_i)_{i\geq 0}$ of elements in $G$ such that 
$$
\lim_{i\to\infty} \sigma^{g_i}(x_0)=x  \mbox{ and then } \lim_{i\to \infty}\sigma^{vu^{-1}g_i}(x_0)=\sigma^{vu^{-1}}(x).
$$ 
Lemma \ref{segundo} implies that for a large enough $i$ there exist  $c_p\in J_p$ for every $n \leq p\leq m-1$, and $\tilde{c}_p\in J_p$ for every   $n \leq p\leq r-1$,  such that 
$$
g_i^{-1}=c_{m-1}\cdots c_n \mbox{ and } g_i^{-1}vu^{-1}=\tilde{c}_{r-1}\cdots \tilde{c}_n,
$$
where $m\geq n+1$ and $r\geq n+1$ are the smallest integers such that $g_i^{-1}F_n\subseteq F_m$ and $g_i^{-1}vu^{-1}F_n\subseteq F_r$ respectively.

Then
$$
c_{m-1}\cdots c_nv=\tilde{c}_{r-1}\cdots \tilde{c}_nu.
$$
Suppose that $r\geq m$. From Lemma \ref{lema1}, this implies that 
$$
\tilde{c}_r=\cdots \tilde{c}_m=1_G  \mbox{ and } \tilde{c}_i=c_i \mbox{ for every } n\leq i\leq  m-1.
$$ 
We get that $u=v$ and then the sets in $\P_n$ are disjoint.

\medskip

Let  $g\in G\setminus F_n$. Let $m > n$ be such that $g\in F_m$. Then the congruency of $(F_n)_{n\geq 0}$ implies there exist $c_i\in J_i$ for every $n\leq i <m$ such that $g=c_{m-1}\cdots c_nu$, for some $u\in F_n$. Then from Lemma \ref{lema1} we get
$$
\sigma^{g^{-1}}(x_0)=\sigma^{u^{-1}}(\sigma^{(c_{m-1}\cdots c_n)^{-1}}(x_0))\in \sigma^{u^{-1}}(C_{n,l}), \mbox{ for some } 1\leq l\leq k_n.
$$ 
This shows that $\P_n$ is  a covering of $X$.

\medskip

(KR2) Condition (C2) implies that $\P_{n+1}$ is finer that $\P_n$.

 \end{proof}
 
Let
$$
\partial X= \bigcup_{g\in G}\bigcap_{n\geq 0} \bigcup_{k=1}^{k_n}\bigcup_{v\in F_n\setminus F_ng}\sigma^{v^{-1}}(C_{n,k}).
$$

 \begin{proposition}\label{minimal-free}
The system $(X, \sigma|_X, G)$ is minimal and free on $X\setminus \partial X$. If $G$ is virtually abelian, the system is free.
 \end{proposition}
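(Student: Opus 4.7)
The plan is to prove three claims: minimality of $(X,\sigma|_X,G)$, freeness on $X\setminus\partial X$, and, when $G$ is virtually abelian, emptiness of $\partial X$ (which upgrades freeness to all of $X$).

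For minimality, I would show that $x_0\in\overline{o_\sigma(x)}$ for every $x\in X$; combined with $X=\overline{o_\sigma(x_0)}$ and the fact that $\overline{o_\sigma(x)}$ is shift-invariant and closed, this forces $\overline{o_\sigma(x)}=X$. Given $x\in X$ and $n\geq 0$, property (KR1) at level $n+1$ provides $v\in F_{n+1}$ and $1\leq k\leq k_{n+1}$ with $\sigma^{v}(x)\in C_{n+1,k}$, and condition (C1) then yields $\sigma^{v}(x)\in C_{n,1}$. Because $(F_n)_{n\geq 0}$ is exhaustive, Lemma \ref{unique-element} gives $\bigcap_{n}C_{n,1}=\{x_0\}$, so the translates produced for increasing $n$ converge to $x_0$ in $\Sigma^G$.

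For freeness on $X\setminus\partial X$, fix $x\in X\setminus\partial X$ and assume $\sigma^{g}(x)=x$. For each $n$, the partition $\P_n$ provides a unique pair $(v_n,k_n)\in F_n\times\{1,\ldots,k_n\}$ with $\sigma^{v_n}(x)\in C_{n,k_n}$. The invariance gives $\sigma^{v_n g}(x)=\sigma^{v_n}(\sigma^{g}(x))=\sigma^{v_n}(x)\in C_{n,k_n}$. Unwinding the definition of $\partial X$, the hypothesis $x\notin\partial X$ applied with the element $g^{-1}\in G$ produces some $n$ for which $v_n\in F_n g^{-1}$, equivalently $v_n g\in F_n$. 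Then $(v_n g,k_n)$ indexes a piece of $\P_n$ containing $x$, and the uniqueness afforded by the partition forces $v_n g=v_n$, hence $g=1_G$.

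To upgrade freeness to all of $X$ in the virtually abelian case, the plan is to show $\partial X=\emptyset$: for every $x\in X$ and every $g\in G$, some $n$ satisfies $v_n(x)g^{-1}\in F_n$. The F\o lner sequence built in Proposition \ref{abelianos} (extended to virtually abelian $G$ via Remark \ref{virtually}) carries an iterated tiling structure by cosets of nested subgroups, and within each abelian layer one enjoys commutativity. I would exploit this rigidity to argue that any fixed $g\in G$ sits ``deep inside'' $F_n$ for $n$ large enough that no coordinate sequence $(v_n(x))_{n\geq 0}$ can persistently avoid $F_n g$. The main obstacle here is that this must hold pointwise for every $x\in X$, not only on a set of full measure (which would follow automatically from the F\o lner property $|F_n\setminus F_ng|/|F_n|\to 0$); the argument must leverage the explicit group-theoretic structure of the F\o lner sequence rather than its approximate invariance alone.
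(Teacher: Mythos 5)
Your first two parts are sound. The minimality argument (every orbit closure contains $x_0$, because the $\P_{n+1}$-address of any $x$ sends it by a translation into $C_{n+1,k}\subseteq C_{n,1}$, and $\bigcap_n C_{n,1}=\{x_0\}$ by Lemma \ref{unique-element}) is valid and somewhat more direct than the paper's, which instead proves that the set of return times of $x_0$ to an arbitrary nonempty cylinder is syndetic. Your freeness argument on $X\setminus\partial X$ is essentially the paper's: both reduce to the disjointness of the atoms of $\P_n$ once one finds an $n$ with $v_ng\in F_n$ (you invoke the element $g^{-1}$ in the union defining $\partial X$ where the paper invokes $g$; this is immaterial).

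The genuine gap is the virtually abelian case. Your plan is to prove $\partial X=\emptyset$, but that is false in general, already for $G=\ZZ$ with $F_n=[0,p_n)$ and $p_n\mid p_{n+1}$: for $g=-1$ one has $F_n\setminus(F_n+g)=\{p_n-1\}$, and the clopen sets $\bigcup_k\sigma^{-(p_n-1)}(C_{n,k})$ are nonempty and nested (because $p_{n+1}-1=(p_{n+1}-p_n)+(p_n-1)$ with $p_{n+1}-p_n\in J_n$), so by compactness there exist points whose level-$n$ address is $p_n-1$ for every $n$; such points lie in $\partial X$. Hence freeness on all of $X$ cannot be obtained by emptying $\partial X$, and you yourself acknowledge that you do not have the argument for the step you need. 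The paper's route is different: given $x$ (possibly in $\partial X$) with $\sigma^g(x)=x$ and $g\neq 1_G$, it first replaces $g$ by a positive power lying in an abelian finite-index subgroup $\Gamma$, extracts a subsequence of addresses $v_{n_k}=v\gamma_k$ all in a single coset $v\Gamma$, notes that $\sigma^{\gamma_k}(x)=\sigma^{v^{-1}v_{n_k}}(x)\to\sigma^{v^{-1}}(x_0)$, and uses commutativity ($\sigma^{g\gamma_k}=\sigma^{\gamma_k g}$) to transport the fixed-point equation to this limit, obtaining $\sigma^{g}(\sigma^{v^{-1}}(x_0))=\sigma^{v^{-1}}(x_0)$; since $\sigma^{v^{-1}}(x_0)\notin\partial X$, the already-established partial freeness forces $g=1_G$. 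The missing idea, in short, is to push the stabilizer of an arbitrary point of $\partial X$ onto a translate of $x_0$ by a limiting argument exploiting commutativity, rather than to try to show that $\partial X$ is empty.
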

\begin{proof}
Let $F\subseteq G$ a finite set, and let $P\in \Sigma^F$ such that
$$
C=\{x\in X: x(F)=P\}\neq \emptyset.
$$
We will show that  $R_C(x_0)=\{g\in G: \sigma^{g^{-1}}(x_0)\in C\}$ is syndetic, which is enough to conclude that the subshift is minimal (see for example \cite[Chapter 1]{A}).

Let $g\in G$ such that $\sigma^g(x_0)\in C$. Since the orbit of $x_0$ is dense in $X$ such a $g$ always exists.  We have $x_0(g^{-1}F)=P$. Let $n>0$ such that 
$g^{-1}F\subseteq F_{n-1}$. Then
$$
x_0(g^{-1}F)=B_{n-1,1}(g^{-1}F)=P.
$$ 
Condition (C1) and Lemma \ref{segundo} imply that for every $c_{m-1}\in J_{m-1}, \cdots, c_n\in J_n$, with $m>n$, we have
$$
\sigma^{(c_{m-1}\cdots c_n)^{-1}}(x_0)\in C_n\subseteq C_{n-1,1}.
$$
Thus we get
$$
\sigma^{(c_{m-1}\cdots c_n)^{-1}}(x_0)(g^{-1}F)=B_{n-1,1}(g^{-1}F)=P.
$$
This shows that $c_{m-1}\cdots c_ng^{-1}\in R_C(x_0)$.
Now let  $h\in G$  and  $m>n$ be such that $g\in F_m$. Lemma \ref{lema1} implies there are $c_{m-1}\in J_{m-1}, \cdots, c_n\in J_n$ such that $h\in c_{m-1}\cdots c_nF_n$. Then we get  $h\in R_{C}(x_0)gF_n$, which implies that $R_{C}(x_0)$ is  syndetic.

Let $x\in X$ and $g\in G$ be such that $\sigma^{g}(x)=x$. For every $n\geq 0$, let $v_n\in F_n$ be such that $x\in \sigma^{v_n^{-1}}(C_n)$. We have $x=\sigma^g(x)\in \sigma^{gv_n^{-1}}(C_n)$.  Because of $\P_n$ is a partition, if  there exists $n\geq 0$ such that $v_ng^{-1}\in F_n$, then $g=1_G$. Thus if there exists $g\in G\setminus \{1_G\}$ such that $\sigma^g(x)=x$, then $v_n\in F_n\setminus F_ng$ for every $n\geq 0$. This shows that the subshift is free on $X\setminus \partial X$.  

Suppose that $G$ is virtually abelian. Let  $\Gamma$ be an abelian finite index subgroup of $G$.  Because $G/\Gamma$ is finite, there exist $l>k\geq 1$ such that $g^l\Gamma=g^k\Gamma$, which implies $g^{l-k}\in \Gamma$.  Thus we can assume that $g\in \Gamma$. On the other hand, there exist a subsequence $(v_{n_k})_{k\geq 0}$ and $v\in G$ such that $v_{n_k}\in v\Gamma$, for every $k\geq 0$.  Let $\gamma_k\in \Gamma$ be such that $v_{n_k}=v\gamma_k$, for every $k\geq 0$.  Since $\sigma^{v_{n_k}}(x)\in C_{n_k}$, we have
$\lim_{k\to \infty}\sigma^{v^{-1}v_{n_k}}(x)=\lim_{k\to \infty}\sigma^{\gamma_{k}}(x)=\sigma^{v^{-1}}(x_0)$.  This implies that $\lim_{k\to \infty}\sigma^{g\gamma_{k}}(x)=\sigma^{gv^{-1}}(x_0)$ and because $\sigma^{g\gamma_{k}}(x)=\sigma^{\gamma_{k}g}(x)=\sigma^{\gamma_k}(x)$, we conclude that
$\sigma^{gv^{-1}}(x_0)=\sigma^{v^{-1}}(x_0)$.   Since $x_0\in X\setminus \partial X$, we deduce $gv^{-1}=v^{-1}$  and then the system is free. 
\end{proof}

\subsection{Invariant measures of $(X, \sigma|_X, G)$.}

A (metrizable) {\it Choquet simplex} is a compact, convex, and metrizable subset $K$ of a locally convex
real vector space, such that 
for each $v\in K$ there is a unique probability measure $\mu$
supported on the set of extreme points of $K$ such that $\int x
d\mu(x)=v$.

\medskip

An {\it invariant measure} of $(X,\sigma|_X, G)$ is a probability measure $\mu$ defined on the Borel sets of $X$ such that $\mu(\sigma^g(A))=\mu(A)$, for every $g\in G$ and every Borel subset $A$ of $X$. We denote $\M(X, \sigma|_X,G)$  the space of all the invariant measures of $(X,\sigma|_X, G)$. Because $G$ is amenable, this is a non empty Choquet simplex \cite{Gl}.  We say that $A\subseteq X$ is a {\it full measure} set of  $X$  if $A^c$ is negligible with respect to any invariant measure of $(X,\sigma|_X, G)$.

\subsubsection{Managed sequence of incidence matrices.}
For every $n\geq 0$ and  $1\leq i\leq k_n$, we define
$$
J_{n,k,i}=\{c\in J_n: B_{n+1,k}(cF_n)=B_{n,i}\},
$$
and $M_n\in \M_{k_n\times k_{n+1}}(\ZZ^+)$ as
$$
M_n(i,j)=|J_{n,j,i}|, \mbox{ for every } 1\leq i \leq k_n, 1\leq j\leq k_{n+1}. 
$$
 It is easy to see that
 $$
 M_n(i,j)=|\{v\in F_{n+1}: \sigma^{v^{-1}}(C_{n+1,j})\subseteq C_{n,i}\}|.
 $$
We say that $M_{n}$ is the {\it incidence matrix} between $\P_{n}$ and $\P_{n+1}$.  Observe that
$$
\sum_{i=1}^{k_n}M_n(i,j)=|J_n|=\frac{|F_{n+1}|}{|F_n|}, \mbox{ for every } 1\leq j \leq k_{n+1}.
$$

Using the terminology introduced in \cite[Section 5]{CP}, this implies that the sequence $(M_n)_{n\geq 0}$ is {\it managed} by $(|F_n|)_{n\geq 0}$.  That is:
\begin{enumerate}
\item $M_n$ has $k_n\geq 2$ rows and $k_{n+1}\geq 2$ columns;

\item $\sum_{i=1}^{k_n}M_n(i,k)=\frac{|F_{n+1}|}{|F_n|}$, for every
$1\leq k\leq k_{n+1}$.

\end{enumerate}
It is  easy to check that  $M_n(\triangle(k_{n+1}, |F_{n+1}|))\subseteq \triangle(k_n, |F_n|)$, where
$$\triangle(k,p)=\left \{(x_1,\cdots, x_k)\in (\RR^+)^k: \sum_{i=1}^kx_i=\frac{1}{p}\right \}.$$  Thus the following inverse limit is well defined.
$$
\varprojlim_n(\triangle(k_n, |F_n|),M_n)=\{(z_n)_{n\geq 0}\in \prod_{n\geq 0}\triangle(k_n, |F_n|): z_n=M_nz_{n+1} \forall n\geq 0\}.
$$

\begin{remark}\label{strictly-positive}
{\rm  We can assume that the matrices $M_n$ are strictly positive. Indeed, if there exists $B_{n,k}$ such that for every $m>n$ it does not appear in $B_{m,1}$, then the clopen set $\{x\in X: x(F_n)=B_{n,k}\}$  is empty. Thus we can assume that for every $n\geq 0$ and $1\leq k\leq k_n$ there exists $m_{n,k}$ such that $B_{n,k}$ appears (as a translated copy) in $B_{m_{n,k},1}$. By (C1) we can assume that   $m_{n,k}=m_n$ is independent on $k$. Thus and by (C1) again, the product $M_n\cdots M_{m_n+1}$ is strictly positive.}
\end{remark}

The next Lemma is the key to show that $\M(X,\sigma|_X, G)$  is affine homeomorphic to $\varprojlim_n(\triangle(k_n, |F_n|),M_n)$.
 
\begin{lemma}\label{border}   $X\setminus \partial X$ is a full measure set of $X$.
\end{lemma}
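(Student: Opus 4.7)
The plan is to bound $\partial X$ by countably many null sets. First I would rewrite
\[
\partial X=\bigcup_{g\in G}E_g,\qquad E_g:=\bigcap_{n\geq 0}\bigcup_{v\in F_n\setminus F_ng}\sigma^{v^{-1}}(C_n),
\]
using that $C_n=\bigcup_{k=1}^{k_n}C_{n,k}$. Since $G$ is countable, it is enough to fix $\mu\in\M(X,\sigma|_X,G)$ and $g\in G$ and prove $\mu(E_g)=0$.

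The crucial input is the previous lemma, which says that
\[
\P_n=\{\sigma^{v^{-1}}(C_{n,k}) : v\in F_n,\ 1\leq k\leq k_n\}
\]
is a clopen partition of $X$. Grouping over $k$, this means that $\{\sigma^{v^{-1}}(C_n):v\in F_n\}$ is also a partition of $X$. By $\sigma$-invariance of $\mu$, all the pieces $\sigma^{v^{-1}}(C_n)$ ($v\in F_n$) have the same measure, hence
\[
\mu(C_n)=\frac{1}{|F_n|}.
\]

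Using invariance and disjointness again,
\[
\mu\!\left(\bigcup_{v\in F_n\setminus F_ng}\sigma^{v^{-1}}(C_n)\right)=\sum_{v\in F_n\setminus F_ng}\mu(\sigma^{v^{-1}}(C_n))=|F_n\setminus F_ng|\cdot\mu(C_n)=\frac{|F_n\setminus F_ng|}{|F_n|}.
\]
Since $E_g$ is contained in every set on the left hand side, monotonicity gives
\[
\mu(E_g)\leq \inf_{n\geq 0}\frac{|F_n\setminus F_ng|}{|F_n|},
\]
and the right F\o lner property of $(F_n)_{n\geq 0}$ (condition (2) of Lemma \ref{remark-folner}) forces this infimum to be $0$. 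A countable union then yields $\mu(\partial X)=0$, which is the claim.

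There is really no hard step here: the whole argument is a Kakutani–Rokhlin tower computation, and the only ingredients are the partition property of $\P_n$, invariance of $\mu$, and the F\o lner condition on $(F_n)_{n\geq 0}$. The only mild subtlety worth pointing out is that we must pass to the F\o lner condition in the form $|F_n\setminus F_ng|/|F_n|\to 0$ rather than $|F_ng\setminus F_n|/|F_n|\to 0$, but these are equivalent by Lemma \ref{remark-folner}, so this is cost-free.
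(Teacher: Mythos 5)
Your argument is correct and is essentially identical to the paper's proof: both reduce to the computation $\mu\bigl(\bigcup_{v\in F_n\setminus F_ng}\sigma^{v^{-1}}(C_n)\bigr)=|F_n\setminus F_ng|/|F_n|$ via the partition property of $\P_n$ and invariance of $\mu$, and then let $n\to\infty$ using the F\o lner condition before taking the countable union over $g\in G$. The only difference is cosmetic: you make explicit the intermediate fact $\mu(C_n)=1/|F_n|$, which the paper leaves implicit in the identity $|F_n\setminus F_ng|\sum_{k=1}^{k_n}\mu(C_{n,k})=|F_n\setminus F_ng|/|F_n|$.
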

\begin{proof}
Let $\mu\in \M(X,\sigma|_X, G), g\in G$ and $n\geq 0$.  We have
\begin{eqnarray*}
\mu\left( \bigcup_{k=1}^{k_n}\bigcup_{v\in F_n\setminus F_ng}\sigma^{v^{-1}}(C_{n,k})  \right) & = &|F_n\setminus F_ng| \sum_{k=1}^{k_n}\mu(C_{n,k})\\
    &=&\frac{|F_n\setminus F_ng|}{|F_n|}
\end{eqnarray*}
Then
$$
\mu\left( \bigcap_{n\geq 0}\bigcup_{k=1}^{k_n}\bigcup_{v\in F_n\setminus F_ng}\sigma^{v^{-1}}(C_{n,k})  \right)\leq \lim_{m\to \infty}\frac{|F_m\setminus F_mg|}{|F_m|}=0,
$$
which implies that $\partial X$  has zero measure with respect to any invariant measure of $(X, \sigma|_X, G)$.
\end{proof}

From Proposition \ref{minimal-free} and Lemma \ref{border} we get: 
\begin{coro}
$(X, \sigma|_X, G)$ is free on a full measure set.
\end{coro}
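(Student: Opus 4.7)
The plan is extremely short because all the heavy lifting has already been done. By Proposition \ref{minimal-free} we already know that $(X, \sigma|_X, G)$ is free on the set $X\setminus \partial X$, meaning that for every $x\in X\setminus \partial X$, the identity $\sigma^g(x)=x$ forces $g=1_G$. On the other hand, Lemma \ref{border} tells us precisely that $X\setminus \partial X$ is a full measure set, i.e.\ its complement $\partial X$ has $\mu$-measure zero for every $\mu\in \M(X,\sigma|_X,G)$.

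So the proof I would write is essentially a one-line invocation: combine these two facts. The set on which the action is free contains $X\setminus \partial X$, which by the lemma is a full measure set. Since being free on $X\setminus \partial X$ together with $X\setminus \partial X$ having full measure is exactly the conclusion we want, nothing else is required.

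There is no real obstacle here, since the difficult content already appears in the two cited results. The only thing worth being careful about is semantic consistency: verifying that the term \emph{free on a full measure set} used in the corollary matches the definition of freeness on a set introduced in Section \ref{section-construction} and the notion of full measure set introduced just before Lemma \ref{border}. Both definitions do match verbatim, so the two-line combination genuinely suffices, and I would present it essentially as: \textit{From Proposition \ref{minimal-free}, $(X,\sigma|_X,G)$ is free on $X\setminus \partial X$; from Lemma \ref{border}, $X\setminus \partial X$ is a full measure set; hence $(X,\sigma|_X,G)$ is free on a full measure set.}
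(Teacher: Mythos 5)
Your proposal is correct and is exactly the paper's argument: the corollary is stated immediately after Lemma \ref{border} with the preamble ``From Proposition \ref{minimal-free} and Lemma \ref{border} we get,'' so the intended proof is precisely the two-line combination you describe. Nothing further is needed.
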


\begin{proposition}\label{Choquet-measures}
 There is an affine homeomorphism between   $\M(X,\sigma|_{X},G)$ and the
inverse limit  $\varprojlim_n(\triangle(k_n, |F_n|),M_n)$. 
\end{proposition}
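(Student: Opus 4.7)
The plan is to construct an explicit map $\Phi \colon \M(X,\sigma|_X,G) \to \varprojlim_n(\triangle(k_n,|F_n|),M_n)$ by setting $\Phi(\mu) = (\mu(C_{n,1}),\ldots,\mu(C_{n,k_n}))_{n\geq 0}$ and to show it is an affine homeomorphism. For well-definedness: by invariance, all $|F_n|$ translates making up $\P_n$ have equal measure, so $\sum_{k=1}^{k_n}\mu(C_{n,k}) = 1/|F_n|$, placing the $n$-th coordinate in $\triangle(k_n,|F_n|)$; the relation $\mu(C_{n,\cdot}) = M_n\,\mu(C_{n+1,\cdot})$ is read off by decomposing each $C_{n,k}$ into $\P_{n+1}$-atoms, since $M_n(k,j)$ counts precisely the number of such atoms of type $j$ inside $C_{n,k}$. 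Affinity is immediate, and weak-$*$ continuity follows because each $C_{n,k}$ is clopen.

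For injectivity, I would exploit Lemma \ref{border}. For $x \in X$ let $v_n(x) \in F_n$ denote the unique element with $x \in \sigma^{v_n(x)^{-1}}(C_n)$; membership in this atom fixes $y|_{v_n(x)^{-1}F_n}$. The condition $x \notin \partial X$ translates to: for every $g \in G$, there is $n$ with $v_n(x) \in F_n g^{-1}$, equivalently $g \in v_n(x)^{-1}F_n$. Because $(\P_n)$ is refining, the atom at any level $m \geq n$ containing $x$ still fixes $y(g)$, and hence $\bigcap_m \P_m(x) = \{x\}$ for every $x \in X \setminus \partial X$. Combined with Lemma \ref{border}, the $\sigma$-algebra generated by $\bigcup_n \P_n$ contains the Borel $\sigma$-algebra modulo null sets. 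Since by invariance $\mu(\sigma^{v^{-1}}(C_{n,k})) = \mu(C_{n,k})$ for every $v \in F_n$, the measure is determined by $\Phi(\mu)$, so $\Phi$ is injective.

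For surjectivity, given $(z_n) \in \varprojlim_n(\triangle(k_n,|F_n|),M_n)$, I would define a premeasure on the clopen algebra $\A$ generated by $\bigcup_n \P_n$ by declaring $\mu(\sigma^{v^{-1}}(C_{n,k})) = z_n(k)$ for every $v \in F_n$ and $1\leq k \leq k_n$. Any two representations of an element of $\A$ as disjoint unions of atoms from different levels can be refined to a common level, and the inverse-limit condition $z_n = M_n z_{n+1}$ guarantees that the two assigned values coincide. Since atoms are clopen in the compact space $X$, any countable disjoint union within $\A$ must actually be finite, so finite additivity upgrades to $\sigma$-additivity automatically, and Carath\'eodory's theorem yields a Borel probability measure on $X$. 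The resulting $\mu$ satisfies $\mu(\sigma^g(A)) = \mu(A)$ for every atom $A$ of every $\P_n$ by construction, hence is $G$-invariant. Finally, since $\M(X,\sigma|_X,G)$ is compact and the target Hausdorff, the continuous bijection $\Phi$ is automatically a homeomorphism.

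The main obstacle is the injectivity step: one has to verify that the somewhat intricate definition of $\partial X$ is tailored exactly so that the atoms of the Kakutani--Rokhlin-like towers $\P_n$ eventually separate points on $X\setminus\partial X$, and then marry this with Lemma \ref{border} to reduce measure-theoretic equality to equality on the $C_{n,k}$. Once that generation property is in hand, both the extension of a coherent sequence $(z_n)$ to a measure and the $G$-invariance of the resulting measure are routine consequences of the construction.
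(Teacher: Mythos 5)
Your overall strategy is exactly the paper's: the paper's proof (which itself defers the remaining details to \cite[Proposition 2]{CP}) rests on precisely the two facts you isolate, namely that by Lemma \ref{border} every invariant measure is carried by $X\setminus\partial X$, and that the atoms of the $\P_n$'s separate points of $X\setminus\partial X$, so that every open set is, modulo a null subset of $\partial X$, a countable union of atoms and the measure is determined by the vector $(\mu(C_{n,k}))_{n,k}$. Your well-definedness, affinity, continuity and injectivity arguments are correct and match the intended proof.

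The one step that does not work as written is the invariance claim in the surjectivity direction: ``The resulting $\mu$ satisfies $\mu(\sigma^g(A))=\mu(A)$ for every atom $A$ of every $\P_n$ by construction.'' This is not true by construction. An atom of $\P_n$ has the form $\sigma^{v^{-1}}(C_{n,k})$ with $v\in F_n$, and its translate $\sigma^g(\sigma^{v^{-1}}(C_{n,k}))=\sigma^{(vg^{-1})^{-1}}(C_{n,k})$ is an atom of $\P_n$ only when $vg^{-1}\in F_n$; otherwise it need not even belong to the algebra generated by $\bigcup_m\P_m$ (recall the paper's own remark that it is unknown whether the atoms generate the topology of $X$, so translates of atoms are genuinely problematic sets here). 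To repair this you must refine: write $\sigma^{v^{-1}}(C_{n,k})$ as a disjoint union of atoms $\sigma^{w^{-1}}(C_{m,l})$ of $\P_m$, observe that for those $w\in F_m$ with $wg^{-1}\in F_m$ the $g$-translate is again a $\P_m$-atom of the same type $l$ (hence same assigned mass), and control the exceptional part by $|F_m\setminus F_mg^{-1}|/|F_m|\to 0$ using the F\o lner property, together with an outer-measure argument showing $\sigma^g(A)$ is measurable for the completed measure because its symmetric difference with a union of $\P_m$-atoms is contained in a null set of $\partial X$-type. This F\o lner approximation is the actual technical content of \cite[Proposition 2]{CP} and cannot be waved away as routine; everything else in your proposal is sound.
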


\begin{proof} From Lemma \ref{border}, the invariant measures of $(X, \sigma|_X, G)$ are supported on $X\setminus \partial X$, and every point in this set is separated by the atoms  of the partitions $\P_n$'s. Thus every open set $U\subseteq X$ is a (countable) union of elements of the atoms of the partitions $\P_n$'s and a set in $\partial X$. This implies that the measure of $U$ is completely determined by the measures of the atoms in $\P_n$'s.  The rest of the proof follows according to \cite[Proposition 2]{CP}.
\end{proof}

\begin{remark} 

{\rm  For any free minimal Cantor system $(X, T, G)$, denote by $D_m(X, T, G)$ the quotient of the additive group $C(X,\ZZ)$ by the subgroup $\{f\in C(X,\ZZ): \int f d\mu = 0, \forall \mu\in \M(X, T,G)\}$, by $D_m(X, T, G)^+$ the positive cone of the classes $[f]$ in $D_m(X, T, G)$ of the functions $f\geq 0$, and by $1$ the class of the constant function $1$. It is known that the ordered group with unit $\G(X,T,G)=(D_m(X, T, G), D_m(X, T, G)^+,1)$ is invariant under topological orbit equivalence (see \cite{GMPS10}). 
In our case, it is not difficult to show that the group generated by the classes of the functions $1_{C_{n,k}}$'s in $D_m(X,\sigma|_X,G)$  is isomorphic, with the induced order and unit, to the dimension group $(H, H^+, u)$, where
$$
H=\xymatrix{{\ZZ^{}}\ar@{->}[r]^{M^T} &{\ZZ^{k_0}}\ar@{->}[r]^{M^T_0} & {\ZZ^{k_1}}\ar@{->}[r]^{M_1^T} &
{\ZZ^{k_2}}\ar@{->}[r]^{M^T_2}& {\cdots}},
$$
$M=|F_0|(1,\cdots,1)$ , $H^+$ is the usual positive cone, and $u=[M^T,0]$.  

Since we do not know if the atoms of the partitions $\P_n's$ generate the topology of $X$, it is unclear for us if $(H, H^+,u)$ is isomorphic  to  $\G(X,\sigma|_X,G)$. Nevertheless, Proposition \ref{Choquet-measures} and \cite[Lemma 1]{CP} ensure that both groups have the same space of traces. 

On the other hand, observe that the rational subgroup of  $\G(X, \sigma|_X, G)$ contains the subgroup
$\langle \{\frac{1}{|F_n|}: n\geq 0\}\rangle,$
which shows it is non cyclic, as is the case for Toeplitz subshifts. }
\end{remark}




\section{Proof of the principal results.}\label{section-results}

The following result corresponds to part (iii) of \cite[Lemma 8]{CP}

\begin{lemma}\label{lema8}
Let $(M_n)_{n\geq 0}$ be a sequence of matrices which is  managed by $(|F_n|)_{n\geq 0}$. For every $n\geq 0$, we denote by $k_n$ the number of rows of $M_n$. Suppose there exists a constant $K>0$ such that
$$
k_{n+1}\leq K\frac{|F_{n+1}|}{|F_n|}, \mbox{ for every } n\geq 0.
$$
Then there exists an increasing sequence $(n_i)_{i\geq 0}$ in $\ZZ^+$ such that for every $i\geq 0$ and for every $1\leq k\leq k_{n_{i+1}}$, 
$$
k_{n_{i+1}}<M_{n_i}\cdots M_{n_{i+1}-1}(l,k) \mbox{ for every } 1\leq l\leq k_{n_i}.
$$
\end{lemma}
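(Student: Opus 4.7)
The plan rests on two observations. First, the management hypothesis makes column sums multiplicative: the column sum of any product $M_a M_{a+1}\cdots M_{b-1}$ equals $|F_b|/|F_a|$ by telescoping. Second, if a matrix $A$ has all entries at least $1$, then every entry of $AB$ is bounded below by the corresponding column sum of $B$. Combining these will let us bound the entries of $M_{n_i}\cdots M_{n_{i+1}-1}$ from below by $|F_{n_{i+1}}|/|F_{n_i+1}|$, and the growth hypothesis on $k_{n+1}$ will then force the strict inequality demanded by the lemma.

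First I would apply Remark \ref{strictly-positive} to pass to a subsequence of consecutive products so that every $M_n$ is strictly positive (with integer entries $\geq 1$). In this reduced setting each column of $M_n$ contains $k_n\geq 2$ positive integers summing to $|F_{n+1}|/|F_n|$, whence $|F_{n+1}|\geq 2|F_n|$ and, in particular, $|F_n|\to\infty$. Both the management condition and the growth hypothesis $k_{n+1}\leq K|F_{n+1}|/|F_n|$ are preserved under this reduction: the former by the multiplicativity of column sums, the latter because $(|F_n|)$ is non-decreasing.

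Then I would construct $(n_i)_{i\geq 0}$ inductively. Set $n_0=0$; having chosen $n_i$, pick $n_{i+1}>n_i+1$ large enough that $|F_{n_{i+1}-1}|>K|F_{n_i+1}|$, which is possible because $|F_n|\to\infty$. For any $1\leq l\leq k_{n_i}$ and $1\leq k\leq k_{n_{i+1}}$, strict positivity of $M_{n_i}$ gives
\[
M_{n_i}\cdots M_{n_{i+1}-1}(l,k)=\sum_{j}M_{n_i}(l,j)\,(M_{n_i+1}\cdots M_{n_{i+1}-1})(j,k)\ \geq\ \sum_{j}(M_{n_i+1}\cdots M_{n_{i+1}-1})(j,k),
\]
and the right-hand sum, being a column sum of a product of managed matrices, equals $|F_{n_{i+1}}|/|F_{n_i+1}|$. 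The hypothesis $k_{n_{i+1}}\leq K|F_{n_{i+1}}|/|F_{n_{i+1}-1}|$ combined with $|F_{n_{i+1}-1}|>K|F_{n_i+1}|$ yields $k_{n_{i+1}}<|F_{n_{i+1}}|/|F_{n_i+1}|$, which closes the argument.

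The main subtlety is the reduction to strictly positive matrices: Remark \ref{strictly-positive} only produces strictly positive block products, so one must verify that the management condition and the growth hypothesis both transfer cleanly to the re-indexed sequence. Once that bookkeeping is in place, the remaining argument is a one-step computation chaining multiplicativity of column sums with the growth bound on $k_n$.
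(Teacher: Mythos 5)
The paper gives no proof of this lemma at all --- it is dispatched with the citation ``part (iii) of \cite[Lemma 8]{CP}'' --- so there is nothing to compare your argument with line by line. On its own terms, your core computation is correct and is surely the intended one: the management condition makes column sums telescope, so every column of $M_{n_i+1}\cdots M_{n_{i+1}-1}$ sums to $|F_{n_{i+1}}|/|F_{n_i+1}|$; if $M_{n_i}$ has integer entries $\geq 1$ then each entry of the full product dominates that column sum; and choosing $n_{i+1}\geq n_i+2$ with $|F_{n_{i+1}-1}|>K|F_{n_i+1}|$ (possible since entries $\geq 1$ force $|F_{n+1}|\geq 2|F_n|$, hence $|F_n|\to\infty$) turns the hypothesis $k_{n_{i+1}}\leq K|F_{n_{i+1}}|/|F_{n_{i+1}-1}|$ into the required strict inequality. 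All the inequalities check out.

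The one genuine weak point is your opening reduction to strictly positive matrices. Remark \ref{strictly-positive} is a statement about the specific incidence matrices of the subshift built in Section 3 (it rests on every block $B_{n,k}$ reappearing inside some $B_{m,1}$); it says nothing about an abstract managed sequence, which is the setting of Lemma \ref{lema8}. And some positivity assumption is genuinely needed: if ``managed'' meant only the two conditions the paper lists, the lemma would be false --- take $k_n=2$ and $M_n=\left(\begin{smallmatrix}p_n&0\\0&p_n\end{smallmatrix}\right)$ with $p_n=|F_{n+1}|/|F_n|$; this is managed, satisfies the growth hypothesis with $K=2$, yet every product $M_{n_i}\cdots M_{n_{i+1}-1}$ has zero off-diagonal entries, so no subsequence works. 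The resolution is that strict positivity of the coefficients is part of the notion of managed sequence as used in \cite{CP} (and is exactly what Remark \ref{strictly-positive} arranges for the subshift's matrices before the results of \cite{CP} are invoked). So you should simply take entries $\geq 1$ as given rather than try to manufacture positivity by passing to block products; with that, your reduction step disappears and the remainder of your argument is a complete and correct proof.
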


 The proof of the next proposition is similar to   \cite[Proposition 3]{CP}.
\begin{proposition}\label{construction}
Let $(M_n)_{n\geq 0}$ be a sequence of matrices which is  managed by $(|F_n|)_{n\geq 0}$. For every $n\geq 0$, we denote by $k_n$ the number of rows of $M_n$. Suppose there exists $K>0$ such that $k_{n+1}\leq K\frac{|F_{n+1}|}{|F_n|}$, for every $n\geq 0$. Then there exists a minimal free $G$-subshift $(X, \sigma|_X, G)$ such that 
 $\M(X,\sigma|_{X},G)$ is affine homeomorphic to
inverse limit  $\varprojlim_n(\triangle(k_n, |F_n|),M_n)$.
 \end{proposition}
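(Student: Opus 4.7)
The plan is to apply the construction of Section~\ref{section-construction} with a carefully chosen family of patterns $(B_{n,k})$ whose incidence matrices realize the prescribed inverse limit. The first move is to invoke Lemma~\ref{lema8}: after passing to a subsequence $(n_i)_{i\geq 0}$, I replace $(M_n)$ by the partial products $P_i := M_{n_i}M_{n_i+1}\cdots M_{n_{i+1}-1}$, which form a sequence managed by $(|F_{n_i}|)_{i\geq 0}$ and whose every entry exceeds $k_{n_{i+1}}$. Since inverse limits are insensitive to telescoping, it suffices to build a subshift whose invariant measure simplex is affine homeomorphic to $\varprojlim_i(\triangle(k_{n_i},|F_{n_i}|),P_i)$.

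Next, I build inductively the patterns $B_{n_i,k}\in\Sigma^{F_{n_i}}$ for $k=1,\dots,k_{n_i}$, arranging them to satisfy the analogs of (C1) and (C2) between consecutive levels $n_i$ and $n_{i+1}$ with respect to the congruency decomposition $F_{n_{i+1}}=\bigsqcup_{c\in J_{n_i,n_{i+1}}}cF_{n_i}$ provided by Lemma~\ref{lema1}. The block at $c=1_G$ is forced to carry $B_{n_i,1}$, while each remaining block $cF_{n_i}$ with $c\ne 1_G$ receives one of the patterns $B_{n_i,2},\dots,B_{n_i,k_{n_i}}$. Condition (C3) of Lemma~\ref{partition} is preserved by the same induction argument as there. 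The combinatorial slack furnished by Lemma~\ref{lema8} is precisely what permits this assignment to realize the column counts prescribed by $P_i$, once one absorbs any excess of the first-row entry (which must equal~$1$ in any construction-incidence matrix) into the free rows $\geq 2$ without altering the resulting inverse limit.

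I then take $x_0\in\Sigma^G$ to be the unique element from Lemma~\ref{unique-element} and set $X=\overline{\{\sigma^g(x_0):g\in G\}}$. Proposition~\ref{minimal-free} together with Lemma~\ref{border} yields that $(X,\sigma|_X,G)$ is minimal and free on the full-measure set $X\setminus\partial X$, and Proposition~\ref{Choquet-measures} identifies $\M(X,\sigma|_X,G)$ with $\varprojlim_i(\triangle(k_{n_i},|F_{n_i}|),P_i)$, hence with $\varprojlim_n(\triangle(k_n,|F_n|),M_n)$.

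The main obstacle is reconciling the structural rigidity of the construction---notably the constraint that every constructed incidence matrix has first row identically $1$ by (C1) and (C2)---with the arbitrariness of the prescribed managed sequence. Lemma~\ref{lema8} is the device that makes this reconciliation possible: after enough telescoping, the large-entry property leaves room to redistribute the column sums among the free rows in a way that does not alter the inverse limit, and verifying that this redistribution indeed yields an affine homeomorphic simplex is the delicate part of the argument, carried out in close analogy with the proof of \cite[Proposition 3]{CP}.
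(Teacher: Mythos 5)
Your overall architecture matches the paper's: telescope via Lemma~\ref{lema8} so that all entries of the (product) matrices dominate the number of columns, build blocks satisfying (C1)--(C3) whose incidence matrices realize the prescribed data, and conclude with Propositions~\ref{minimal-free}, \ref{Choquet-measures} and Lemma~\ref{border}. However, there is a genuine gap at the one step you yourself flag as ``delicate'': the treatment of the forced first row. Because of (C1) and (C2), the pattern $B_{n,1}$ occurs \emph{exactly once} in each $B_{n+1,k}$, so the first row of any realizable incidence matrix is identically $1$. Your proposal is to keep $k_{n}$ patterns at each level and ``absorb the excess $P_i(1,k)-1$ into the free rows $\geq 2$ without altering the resulting inverse limit.'' This redistribution perturbs the entries of rows $\geq 2$ by amounts comparable to $P_i(1,k)$, which can be of the same order as the column sum $|F_{n_{i+1}}|/|F_{n_i}|$; such a change genuinely alters the maps $\triangle(k_{n_{i+1}},|F_{n_{i+1}}|)\to\triangle(k_{n_i},|F_{n_i}|)$ and hence the inverse limit. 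For instance, with $2\times 2$ matrices, forcing the first row to $(1,1)$ and dumping the excess into the second row produces nearly proportional columns and collapses the simplex toward a point, regardless of what the original $M_n$ prescribed. No choice of redistribution among the \emph{existing} rows fixes this.

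The paper's device, which is the idea missing from your argument, is to \emph{enlarge} the alphabet of patterns rather than redistribute: each $M_n$ is replaced by a $(k_n+1)\times(k_{n+1}+1)$ matrix $\tilde M_n$ whose new first row is identically $1$ (this row is carried by the ``seed'' pattern $B_{n,1}$), whose first two columns coincide, and whose remaining entries are those of $M_n$ up to subtracting $1$ from the former first row. The column sums are unchanged, so $(\tilde M_n)_n$ is still managed by $(|F_n|)_{n\geq 0}$, and the affine homeomorphism $\varprojlim_n(\triangle(k_n,|F_n|),M_n)\cong\varprojlim_n(\triangle(k_n+1,|F_n|),\tilde M_n)$ is exactly what \cite[Lemma 1]{CP} (bounded perturbation of entries against growing column sums) and \cite[Lemma 2]{CP} (duplicated columns) provide. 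Without this augmentation---or an equivalent substitute---your construction realizes some inverse limit, but not the prescribed one, so the proof as written does not go through.
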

\begin{proof}
Because of Lemma \ref{lema8}, we can assume that for every $n\geq 0$,
$$
k_{n+1}<\min\{M_n(i,j): 1\leq i\leq k_n, 1\leq j\leq k_{n+1}\}.
$$ 
For every $n\geq 0$, let $\tilde{M}_n$ the $(k_n+1)\times (k_{n+1}+1)$ dimensional matrix defined as
$$
\tilde{M}_n(\cdot, 1)=\tilde{M}_n(\cdot, 2)=\left(\begin{array}{l}
                                              1\\
                                               M_n(1,1)-1\\
                                               M_n(2,1)\\
                                               \vdots\\
                                               M_n(k_n,1)
                                               \end{array}\right) 
$$, and
$$
\tilde{M}_n(\cdot, k+1)= \left(\begin{array}{l}
                                              1\\
                                               M_n(1,k)-1\\
                                               M_n(2,k)\\
                                               \vdots\\
                                               M_n(k_n,k)
                                               \end{array}\right), \mbox{ for every } 2\leq k\leq k_{n+1}.  
$$
From \cite[Lemma 1]{CP} and \cite[Lemma 2]{CP} we have that the inverse limits $\varprojlim_n(\triangle(k_n, |F_n|),M_n)$ and $\varprojlim_n(\triangle(k_n+1, |F_n|), \tilde{M}_n)$ are affine homeomorphic. 
 Observe that  $(\tilde{M}_n)_{\geq 0}$ is managed by $(|F_n|)_{n\geq 0}$ and verifies
for every $n\geq 0$,
\begin{equation}\label{importante}
3\leq k_{n+1}+1\leq \min\{M_n(i,j): 2\leq i\leq k_n+1, 1\leq j\leq k_{n+1}+1\}.
\end{equation}
In order to reduce the notation, we call $l_n$ and $l_{n+1}$ the number of rows and columns of $\tilde{M}_n$ respectively.

Let  $\Sigma=\{0,\cdots, l_0\}$. For every $1\leq k\leq k_0$ let define $B_{0,k}\in \Sigma^{F_0}$ as
$$
B_{0,k}(v)= \left\{\begin{array}{ll}
                                            k  &  \mbox{ if } v=1_G\\
                                            0  & \mbox{ if } v\in F_0\setminus\{1_G\}.
                                            \end{array}\right. 
$$ 

For $n\geq 0$, suppose that we have defined $B_{n,1}, \cdots, B_{n,l_n}$ different elements in $\Sigma^{F_n}$ that satisfy condition (C3). Observe this is true for $n=0$.  We define $B_{n+1, 1},\cdots, B_{n+1, l_{n+1}}$ in $\Sigma^{F_{n+1}}$ as follows: for every $1\leq k \leq k_{n+1}$
$$B_{n+1,k}(F_n)=B_{n,1},$$ 
and for every  $c\in J_n\setminus \{1_G\}$,
$$B_{n+1,k}(cF_n)\in \{B_{n,2},\cdots, B_{n,k_n}\}$$  in a way such that
$$|\{v\in J_n: B_{n+1,k}(cF_n)=B_{n,i}\}|=\tilde{M}_n(i,k),$$
for every $2\leq i\leq l_n$.

Condition (\ref{importante}) ensures that it is possible to make this procedure in order that $B_{n+1,k}\neq B_{n+1,s}$ if $k\neq s$.  By construction, $B_{n,1},\cdots, B_{n,l_n}$ satisfy (C1), (C2) and (C3). Then, the associated system $(X, \sigma|_X, G)$ is a minimal  free (see Proposition \ref{minimal-free}) $G$-subshift such that $\M(X, \sigma|_X, G)$ is affine homeomorphic to  $\varprojlim_n(\triangle(k_n, |F_n|),M_n)$ (see Proposition \ref{Choquet-measures}).
\end{proof}

\begin{proof}[Proof of Theorem 2]
The proof is direct from Proposition \ref{construction}, \cite[Lemma 9]{CP} and \cite[Lemma 12]{CP}.
\end{proof}
 
 
 The next result is a consequence of Theorem \ref{nilpotentes} and Theorem \ref{theoremA}.
 
  \begin{coro}
  Let $G$ be a countable infinite nilpotent group. For every metrizable Choquet simplex $K$, there  exists a minimal    $G$-subshift which is free on a full measure set,   whose set of invariant probability measures is affine homeomorphic to $K$. If $G$ is abelian, then the subshift is free.
  \end{coro}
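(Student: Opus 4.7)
The plan is to show this is an immediate consequence of the two main theorems established earlier in the paper. First I would verify that any countable nilpotent group $G$ is amenable: this is standard, since abelian groups are amenable and amenability is preserved under group extensions, so the iterated extensions along the lower central series of a nilpotent group remain amenable. This step is needed only to match the amenability hypothesis appearing in Theorem \ref{theoremA}.

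Next, I would invoke Theorem \ref{nilpotentes}, which asserts that every countable virtually nilpotent group is congruent monotileable; in particular $G$ itself is congruent monotileable. Since $G$ is also infinite and amenable, the hypotheses of Theorem \ref{theoremA} are satisfied, and that theorem directly produces a minimal $G$-subshift, free on a full measure set, whose simplex of invariant probability measures is affine homeomorphic to the prescribed metrizable Choquet simplex $K$.

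For the refinement in the abelian case, I would observe that an abelian group is trivially virtually abelian (it is a finite-index subgroup of itself). Hence the ``virtually abelian'' clause of Theorem \ref{theoremA} applies and upgrades ``free on a full measure set'' to ``free on all of $X$''. There is essentially no obstacle: the corollary is pure bookkeeping combining Theorem \ref{nilpotentes} and Theorem \ref{theoremA} with the trivial observations that nilpotent groups are amenable and that abelian groups are virtually abelian.
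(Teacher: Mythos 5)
Your proposal is correct and matches the paper exactly: the paper states the corollary as a direct consequence of Theorem \ref{nilpotentes} and Theorem \ref{theoremA}, which is precisely the bookkeeping you carry out (including the routine observations that countable nilpotent groups are amenable and that abelian groups are virtually abelian).
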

 
  \medskip

{\bf Acknowledgments.}  We would like to  thank  Brandon Seward  for  
his valuable comments about monotilable groups.


\end{document}